\theoremstyle{plain}
\theoremstyle{definition}
\def\thm@space@setup{%
  \thm@preskip=\parskip \thm@postskip=0pt
}
\newtheorem*{rep@theorem}{\rep@title}
\newcommand{\newreptheorem}[2]{%
\newenvironment{rep#1}[1]{%
 \def\rep@title{#2 \ref{##1}}%
 \begin{rep@theorem}}%
 {\end{rep@theorem}}}
\newtheorem{theorem}{Theorem}[section]
\newtheorem{corollary}[theorem]{Corollary}
\newtheorem{lemma}[theorem]{Lemma}
\newtheorem{proposition}[theorem]{Proposition}
\newtheorem*{theorem*}{Theorem}
\newtheorem*{corollary*}{Corollary}
\newtheorem*{proposition*}{Proposition}
\newtheorem{definition}[theorem]{Definition}
\newtheorem{remark}[theorem]{Remark}
\newtheorem*{claim*}{Claim}
\newtheorem*{conjecture*}{Conjecture}
\newtheorem*{observation*}{Observation}
\newtheorem*{question*}{Question}
\numberwithin{equation}{section}
\newcommand{\ub}{{\rm U}}
\newcommand{\fai}{\varphi}
\begin{document}
\title{Ergodic invariant states and irreducible representations of crossed product $C^*$-algebras}
\author{Huichi Huang}
\email{huanghuichi@gmail.com}
\author{Jianchao Wu}
\address{Jianchao Wu, Mathematisches Institut, Universit\"{a}t M\"{u}nster, Einsteinstr. 62,  M\"unster, 48149, Germany}
\email{jianchao.wu@uni-muenster.de}
\keywords{Invariant state, crossed product $C^*$-algebra, irreducible representation}
\subjclass[2010]{Primary 46L30, 46L55, 37B99}
\date{\today}
\begin{abstract}
Motivated by reformulating  Furstenberg's $\times p,\times q$ conjecture via representations of a crossed product $C^*$-algebra,  we show that in a discrete  $C^*$-dynamical system $(A,\Gamma)$, the space of (ergodic) $\Gamma$-invariant states on $A$ is homeomorphic to a subspace of (pure) state space of $A\rtimes\Gamma$. Various applications of this in topological dynamical systems and representation theory are obtained. In particular, we prove that the classification of ergodic $\Gamma$-invariant regular Borel probability measures on a compact Hausdorff space $X$ is equivalent to the classification  a special type of irreducible representations of $C(X)\rtimes \Gamma$.
\end{abstract}

\maketitle

\section{Introduction}\

Assume  that $p,q$ are two positive integers greater than 1 with $\frac{\log{p}}{\log{q}}$ irrational. 

H. Furstenberg gives the classification of closed $\times p,\times q$-invariant subsets of the unit circle $\mathbb{T}$, which says such a set is either finite or $\mathbb{T}$~\cite[Theorem IV.1.]{Furstenberg1967}. He also gives the following conjecture concerning the classification of  ergodic $\times p,\times q$-invariant measures on $\mathbb{T}$.

\begin{conjecture*}~[Furstenberg's $\times p,\times q$ conjecture]\

An ergodic $\times p,\times q$-invariant  Borel probability measure on $\mathbb{T}$ is either finitely supported or the Lebesgue measure.
\end{conjecture*}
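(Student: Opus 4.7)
Before sketching anything, I must acknowledge that this is Furstenberg's own conjecture from 1967 and remains, to the present day, one of the central open problems in ergodic theory. Any realistic ``proof plan'' is therefore aspirational, and the sketch below is the strategy that the paper's $C^*$-algebraic reformulation appears designed to support, rather than a route I expect to push all the way through.

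The first (and only historically completed) step is the positive-entropy case. Let $\mu$ be an ergodic $\times p,\times q$-invariant Borel probability measure on $\T$. Since $\log p/\log q$ is irrational, $p$ and $q$ are multiplicatively independent, so Rudolph's theorem (subsequently extended by Johnson) applies: if $h_\mu(\times p) > 0$, then $\mu$ is forced to be Lebesgue measure. This step would be invoked directly rather than re-proved.

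It remains to show that if $h_\mu(\times p) = 0$, then $\mu$ is supported on a finite set (necessarily a common periodic orbit of the two maps). Here I would exploit the correspondence promised in the abstract between ergodic $\Gamma$-invariant states on $C(\T)$ and a distinguished subclass of irreducible representations of $C(\T) \rtimes \Gamma$, where $\Gamma$ is the (semi)group generated by multiplication by $p$ and $q$. The hope is that a zero-entropy ergodic measure translates into an irreducible representation whose structure is rigid enough --- for instance, via an analysis of the commutant of $C(\T)$ inside the weak closure, or by comparison with representations induced from finite $\Gamma$-orbits --- to force it to come from an atomic measure concentrated on a periodic orbit.

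The main obstacle is, of course, precisely this zero-entropy case: this is the step where every previous attempt (Host, Lindenstrauss, Bourgain, Meiri, and others) has had to impose some auxiliary hypothesis such as positive Hausdorff dimension, entropy along the second map, or a joining condition. The attraction of the $C^*$-algebraic angle is that it recasts a dimensional/dynamical question as a representation-theoretic one, opening the door to tools like rigidity of crossed products, amenability arguments, or groupoid $C^*$-algebraic methods. Whether any of these tools can actually classify the relevant irreducible representations tightly enough to exclude all non-atomic zero-entropy possibilities is, at present, entirely open, and this is where I would expect the plan to stall.
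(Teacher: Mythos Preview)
You have correctly identified that this is not a theorem the paper proves: Furstenberg's conjecture is stated in the paper only as motivation, and it remains open. The paper does not attempt a proof; its contribution is the reformulation in Corollary~\ref{FviaR}, recasting the conjecture as a uniqueness statement about irreducible unitary representations of $\mathbb{Z}[\frac{1}{pq}]\rtimes\mathbb{Z}^2$. Your honest assessment that any plan ``would stall'' at the zero-entropy step is exactly right, and matches the state of the literature.

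One technical point worth sharpening in your sketch: you speak of $\Gamma$ acting on $C(\mathbb{T})$, but $\times p$ and $\times q$ are not automorphisms of $\mathbb{T}$ (they are non-injective), so there is no genuine discrete $C^*$-dynamical system $(C(\mathbb{T}),\Gamma)$ in the sense the paper uses. The paper handles this by passing to the $pq$-solenoid $S_{pq}=\widehat{\mathbb{Z}[\frac{1}{pq}]}$, on which $\times p$ and $\times q$ \emph{are} automorphisms, and then showing (Proposition~\ref{invariant measures on solenoid}) that $\times p,\times q$-invariant measures on $\mathbb{T}$ correspond bijectively to those on $S_{pq}$. Only after this lift does the crossed product $C(S_{pq})\rtimes\mathbb{Z}^2\cong C^*(\mathbb{Z}[\frac{1}{pq}]\rtimes\mathbb{Z}^2)$ become available. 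If you pursue the representation-theoretic strategy further, this solenoid passage is not optional.
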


Furstenberg's conjecture is the simplest case of conjectures concerning classifications of invariant measures, and there are vast literatures about its general versions and their applications in number theory. See~\cite{EinsiedlerLindenstrauss2006} for a survey.

For Furstenberg's conjecture, the best known result is the following theorem, which is proven by D. J. Rudolph under the assumption that $p,q$ is coprime~\cite[Theorem 4.9.]{Rudolph1990}, later improved by A. S. A. Johnson~\cite[Theorem A]{Johnson1992}.

\begin{theorem*}~[Rudolph-Johnson's Theorem]\

If $\mu$ is an ergodic $\times p,\times q$-invariant measure on $\mathbb{T}$, then either $h_\mu(T_p)=h_\mu(T_q)=0$ or $\mu$ is the Lebesgue measure.
\end{theorem*}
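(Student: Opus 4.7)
The strategy is to prove Rudolph--Johnson by entropy-theoretic rigidity of the commuting $\N^2$-action generated by $T_p$ and $T_q$. Assume $h_\mu(T_p) > 0$ (the case $h_\mu(T_q) > 0$ is symmetric); the goal is to force $\mu$ to coincide with Lebesgue measure.

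First I would pass to the invertible natural extension so as to work with a $\Z^2$-action, which is possible since $\log p/\log q$ being irrational ensures that the semigroup generated by $T_p$ and $T_q$ is freely isomorphic to $\N^2$ (so there is no hidden relation among their iterates). Next consider the Pinsker $\sigma$-algebra $\Pi$ of $T_p$, the maximal sub-$\sigma$-algebra on which $T_p$ has zero entropy. Because $T_p$ and $T_q$ commute, $T_q^{-1}\Pi$ is also a $T_p$-invariant zero-entropy factor, hence contained in $\Pi$ by maximality; measure preservation on the natural extension promotes this to full $T_q$-invariance of $\Pi$. The system then decomposes (in the sense of relative entropy) into a zero-$T_p$-entropy quotient and a relatively Kolmogorov extension, on which $T_p$ carries all of the positive entropy.

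The decisive step, due to Rudolph, is to show that the disintegration of $\mu$ over $\Pi$ has conditional measures invariant under translation by a nontrivial closed subgroup $H \leq \T$. One first establishes relative Bernoullicity over $\Pi$ using the positive-entropy hypothesis together with a symbolic coding argument adapted to the $\times p$-expanding structure, and then exploits commutation with $T_q$ together with the irrationality of $\log p/\log q$ to rule out finite cyclic subgroups and force $H = \T$. Consequently the conditional measures are Lebesgue on almost every fiber, and a standard averaging argument promotes this to $\mu$ itself being Lebesgue.

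The main obstacle lies in constructing the subgroup $H$: one needs a very sharp analysis of how the conditional measures of $\mu$ along the Pinsker fibers transform under $T_p$ and $T_q$. Rudolph achieves this via a scale-invariance argument that requires $\gcd(p,q)=1$, reducing the problem to a statement about invariant measures for a certain skew product; Johnson then removes coprimality through a more delicate entropy calculation exploiting the full $\Z^2$-action and the Abramov--Rokhlin formula for conditional entropy. I would expect the bulk of the technical difficulty, and essentially all of the novelty of the argument, to live in this step.
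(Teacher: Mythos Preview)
The paper does not prove this statement. Rudolph--Johnson's Theorem is quoted in the introduction purely as background, with citations to \cite{Rudolph1990} and \cite{Johnson1992}, and the paper's own results (Theorems~\ref{homeomorphism between states}, \ref{ErgodicIrr}, \ref{dim}, \ref{finiterepresentation}) neither depend on it nor reprove it. So there is nothing in the paper to compare your proposal against.

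That said, as a sketch of the actual Rudolph--Johnson argument your outline is broadly faithful: passing to the natural extension, using the Pinsker $\sigma$-algebra of $T_p$ and showing its $T_q$-invariance via commutation, analyzing conditional measures along Pinsker fibers and proving invariance under a closed subgroup $H\leq\T$, and then using the irrationality of $\log p/\log q$ to exclude finite $H$. You are also right that Rudolph's original argument uses $\gcd(p,q)=1$ in an essential way and that Johnson's extension requires additional entropy-theoretic work. Where your sketch is thinnest is exactly where you acknowledge it to be: the construction of the translation subgroup $H$ from positive relative entropy is the heart of the matter, and phrases like ``relative Bernoullicity'' and ``scale-invariance argument'' are placeholders for substantial pieces of ergodic theory that would each need several pages. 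As a plan this is accurate; as a proof it is still a long way from complete. But for the purposes of this paper none of that is required, since the theorem is simply being invoked from the literature.
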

Here $h_\mu(T_p)$ and $h_\mu(T_q)$ stand for the measure-theoretic entropy of $\times p$ and $\times q$ with  respect to $\mu$  respectively. See~\cite[Chapter 4]{Walters1982} for the definition of entropy for measure preserving maps.

For a $\times p,\times q$-invariant measure on $\mathbb{T}$, denote the two isometries on $L^2(\mathbb{T},\mu)$ induced by continuous maps $\times p,\times q:\mathbb{T}\to\mathbb{T}$ by $V_p, V_q$.

By Rudolph-Johnson's Theorem,  to classify ergodic $\times p,\times q$-invariant measures on $\mathbb{T}$, it suffices to classify such ergodic measures with zero entropy for $T_p$ or $T_q$.

J. Cuntz notices that when $h_\mu(T_p)=h_\mu(T_q)=0$, the operators $V_p$ and $V_q$ are two commuting unitary operators on $L^2(\mathbb{T},\mu)$~\cite[Corollary 4.14.3]{Walters1982}.

For the unitary operator $M_z:L^2(\mathbb{T},\mu)\to L^2(\mathbb{T},\mu)$ given by $M_zf(z)=zf(z)$ for all $f\in L^2(\mathbb{T},\mu)$ and $z\in \mathbb{T}$, one have $V_p M_z=M_z^pV_p$ and $V_q M_z=M_z^qV_q$. So a $\times p,\times q$ invariant measure $\mu$ with zero entropy gives rise to a representation $\pi_\mu$ of the universal unital $C^*$-algebra $C^*(s,t,z)$ generated by three unitaries $s,t$ and $z$ with relations
$$st=ts,\quad sz=z^ps,\quad tz=z^qt$$ in the following way:
$$\pi_\mu(s)=V_p,\quad \pi_\mu(t)=V_q, \quad \pi_\mu(z)=M_z.$$

With the above observation,  Cuntz suggests that one can consider ergodic $\times p,\times q$-invariant measures on $\mathbb{T}$ via  representations of $C^*(s,t,z)\cong C^*(\mathbb{Z}[\frac{1}{pq}])\rtimes\mathbb{Z}^2$, where the two generators of $\mathbb{Z}^2$ acts on $C^*(\mathbb{Z}[\frac{1}{pq}])$ by automorphisms induced by $\times p,\times q$ maps on $[\frac{1}{pq}]$, and the isomorphism
$\Phi:C^*(s,t,z)\to C^*(\mathbb{Z}[\frac{1}{pq}])\rtimes\mathbb{Z}^2$ is given by $\Phi(s)=a, \Phi(t)=b$ and $\Phi(z)=1$. Here  $a=(1,0)$ and $b=(0,1)$ are in $\mathbb{Z}^2$ and $1$ is in $\mathbb{Z}[\frac{1}{pq}]$~\cite{Cuntz2013}.

Motivated by Cuntz's observation, firstly one have to answer the following question:

{\it what kind of representation of $C^*(\mathbb{Z}[\frac{1}{pq}])\rtimes\mathbb{Z}^2$ is induced by a $\times p,\times q$-invariant measure on $\mathbb{T}$?}

Denote the dual of $\mathbb{Z}[\frac{1}{pq}]$ by $S_{pq}$, the $pq$-solenoid~\cite[A.1]{Robert2000}. The $\times p,\times q$ isomorphisms on $\mathbb{Z}[\frac{1}{pq}]$ give rise to $\times p,\times q$ isomorphisms on $S_{pq}$.

We answer the above question in the following way.

Firstly the space of ergodic $\times p,\times q$-invariant measures on $\mathbb{T}$ is homeomorphic to the space of ergodic $\times p,\times q$-invariant measures on $S_{pq}$, hence the classification of ergodic $\times p,\times q$-invariant measures on $\mathbb{T}$ amounts to classification of  ergodic $\times p,\times q$-invariant measures on $S_{pq}$. Secondly  ergodic $\times p,\times q$-invariant measures on $S_{pq}$ 1-1 corresponds to irreducible representations of $C^*(\mathbb{Z}[\frac{1}{pq}])\rtimes\mathbb{Z}^2$ whose restriction to $\mathbb{Z}^2$ contains the trivial representation.

Moreover in a more general context, we prove the following which briefly shows how the problem of invariant states relates to crossed product $C^*$-algebras.

Assume that a discrete group $\Gamma$ acts on a unital $C^*$-algebra $A$ as automorphisms. Denote this action by $\alpha$, which is, a  group homomorphism from $\Gamma$ to the automorphism group $Aut(A)$ of $A$.

A state $\fai$ on $A$ is {\bf $\Gamma$-invariant} if $\fai(\alpha_s(a))=\fai(a)$ for all $s$ in
$\Gamma$ and $a$ in $A$.  An extreme point of the set of $\Gamma$-invariant states on $A$~\footnote{This is a closed convex set when equipped with weak-$*$ topology, hence when nonempty, the set of extreme points is also nonempty.}  are called {\bf ergodic}.

Denote by $A\rtimes\Gamma$ the full crossed product of the $C^*$-dynamical system $(A,\Gamma,\alpha)$.

\begin{reptheorem}{homeomorphism between states}\
 The space of (ergodic) $\Gamma$-invariant states on $A$ is homeomorphic to the space of (pure) states on $A\rtimes\Gamma$ whose restriction to $\Gamma$ is the trivial character.
\end{reptheorem}

We give some applications of Theorem~\ref{homeomorphism between states} to topological dynamical systems and representation theory.

Suppose a discrete group $\Gamma$ acts on a compact Hausdorff space $X$ as homeomorphisms~(this is the same as $\Gamma$ acting on the unital $C^*$-algebra $C(X)$, the space of continuous functions on $X$, by automorphisms). For a representation $\pi: C(X)\rtimes\Gamma\to B(H)$, denote the space of $\Gamma$-invariant vectors in $H$ by $H_\Gamma$.

\begin{reptheorem}{dim}\
Every irreducible representation  $\pi$ of $C(X)\rtimes\Gamma$ on a Hilbert space $H$ satisfies that $\dim{H_\Gamma}\leq 1$. When $\dim{H_\Gamma}=1$, the representation $\pi$ is uniquely induced by an ergodic $\Gamma$-invariant regular Borel probability measure $\mu$ on $X$.
\end{reptheorem}

A special case of Theorem~\ref{dim} is the following.

\begin{repcorollary}{rep}\
Suppose a discrete group $\Gamma$ acts on a discrete abelian group $G$ by group automorphisms.

Every irreducible unitary representation $\pi$ of $G\rtimes\Gamma$ on a Hilbert space $H$ satisfies that $\dim{H_\Gamma}\leq 1$.

When $\dim{H_\Gamma}=1$, the representation $\pi$ is uniquely induced by an ergodic $\Gamma$-invariant regular Borel probability measure $\mu$ on the Pontryagin dual $\widehat{G}$ of $G$.
\end{repcorollary}

The paper is organized as follows.

In the preliminary section, we recall some background of crossed product $C^*$-algebras.  The proof of  Theorem~\ref{homeomorphism between states} is given in section 3.2. At the end of this section, we include two immediate applications of Theorem~\ref{homeomorphism between states}, namely, Proposition~\ref{ergodic via projections} and Proposition~\ref{nonempty}, to $C^*$-dynamical systems. In section 3.3, we show Theorem~\ref{ErgodicIrr} and Theorem~\ref{dim}. In the last section we prove Theorem~\ref{finiterepresentation} which enables us to reformulate Furstenberg's $\times p,\times q$ problem in terms of representation theory of the semidirect product group $\mathbb{Z}[\frac{1}{pq}]\rtimes\mathbb{Z}^2$.


\section*{Acknowledgements}

We are grateful to Joachim Cuntz for sharing his insight for Furstenberg's problem with us. We benefits a lot from various discussions with him. H. Huang would thank Xin Li for his suggestion to consider invariant measures in  more general settings. He also thanks  Hanfeng Li for his detailed comments. Thanks are also due to Kang Li for his pointing out the reference~\cite{WillettYu2014} to us which helps to prove Proposition~\ref{nonempty}. We also thank Sven Raum for his valuable comments which lead to much briefer proofs of Lemma~\ref{ResInv} and Proposition~\ref{ergodic via projections}. We also thank an anonymous referee for helpful comments.

The paper was finished when we were postdoctoral fellows supported by  ERC Advanced Grant No. 267079.

\section{Preliminary}\

In this section, we list some background for $C^*$-dynamical systems.

Within this article $\Gamma$ stands for a discrete group and $A$ stands for a unital $C^*$-algebra whose state space and pure state space are denoted by $S(A)$ and $P(A)$ respectively.

Denote the GNS representation of $A$ with respect to a $\fai\in S(A)$ by $\pi_\fai: A\to B(L^2(A,\fai))$ where $L^2(A,\fai)$ stands for the Hilbert space corresponding to $\pi_\fai$. Let
$I_\fai=\{a\in A|\,\fai(a^*a)=0\}$. Denote $a+I_\fai$ by $\hat{a}$ for all $a\in A$.

\begin{definition}~\label{invstate}
An action of $\Gamma$ on $A$ as automorphisms is a group homomorphism $\alpha: \Gamma\to Aut(A)$, where $Aut(A)$ stands for the set of $*$-isomorphisms from $A$ to $A$~(this is a group under composition). We call  $(A,\Gamma,\alpha)$ a dynamical system.

A {\bf $\Gamma$-invariant state} is a state $\fai$ on $A$ such that $\fai(\alpha_s(a))=\fai(a)$ for all $s\in\Gamma$ and $a\in A$~\cite{Ruelle1966}. Denote the set of $\Gamma$-invariant states on $A$ by $S_\Gamma(A)$. It is clear that $S_\Gamma(A)$ is a convex closed set under weak-$*$ topology. If  $S_\Gamma(A)$ is nonempty, then it contains at least one extreme point. We call an extreme point of $S_\Gamma(A)$ an {\bf ergodic $\Gamma$-invariant state} on $A$. The set of ergodic $\Gamma$-invariant states on $A$ is denoted by $E_\Gamma(A)$.
\end{definition}

A representation of a $C^*$-algebra $B$ on a Hilbert space $H$ is a $*$-homomorphism $\pi:B\to B(H)$ and it is called irreducible if the commutant $C(\pi(B))$ consisting of elements in $B(H)$ commuting with every element in $\pi(B)$ are scalar multiples of identity operator.

A {\bf covariant} representation $(\pi,\ub,H)$ of a dynamical system  $(A,\Gamma,\alpha)$ consists of a representation $\pi$ of $A$ and a unitary representation $\ub$ of $\Gamma$ on a Hilbert space $H$ such that
$$\pi(\alpha_s(a))=\ub_s\pi(a)\ub_s^*$$ for all $a\in A$ and $s\in\Gamma$.

Let $C_c(\Gamma,A)$ be the space of finitely supported $A$-valued functions on $\Gamma$. For $f,g\in C_c(\Gamma,A)$,  the product $f*g$ is given by
 $$f*g(t)=\sum_{s_1s_2=t}f(s_1)\alpha_{s_1}(g(s_2))$$  and $f^*$ is given by $$f^*(t)=\alpha_t(f(t^{-1})^*)$$ for every $t\in\Gamma$. Then $C_c(\Gamma,A)$ is a $*$-algebra . Given a covariant representation  $(\pi,\ub,H)$ of a dynamical system  $(A,\Gamma,\alpha)$,  one can construct a representation of $C_c(\Gamma,A)$ on $H$.

\begin{definition}~\label{crossed product}
For a dynamical system $(A,\Gamma,\alpha)$, the {\bf crossed product $C^*$-algebra} $A\rtimes\Gamma$ is the completion of $C_c(\Gamma,A)$ under the norm $\|f\|=\sup\|\tilde{\pi}(f)\|$ for $f\in C_c(\Gamma,A)$ where the supreme is taken over all representations of $C_c(\Gamma,A)$. Denote by $u_s$ the unitary in $A\rtimes\Gamma$ corresponding to an $s\in\Gamma$.
\end{definition}

There is a one-one correspondence between  representations of $A\rtimes\Gamma$ and covariant representations of $(A,\Gamma,\alpha)$.

We refer readers to~\cite[Chapter VIII]{Davidson1996} for more about discrete crossed products.

\section{Main results}\

\subsection{Covariant representations of $(A,\Gamma,\alpha)$ induced by invariant states}\

If $\fai\in S_\Gamma(A)$, then there is a unitary representation~(the Koopman representation) $\ub_\fai$ of $\Gamma$ on $L^2(A,\fai)$ given by
$$\ub_\fai(s)(\hat{a})=\widehat{\alpha_s(a)}$$ for all $s\in\Gamma$ and $a\in A$~\cite{Ruelle1966,LanfordRuelle1967,RobinsonRuelle1967}.  We give  details below for completeness.

The representation $\ub_\fai$ is  well-defined since

\begin{enumerate}
\item For every $a\in I_\fai$, we have $$\fai(\alpha_s(a)^*\alpha_s(a))=\fai(\alpha_s(a^*a))=\fai(a^*a)=0$$ for all $s\in\Gamma$.
\item For every $s\in\Gamma$, the map $\ub_\fai(s)$ is surjective since its image is dense in $L^2(A,\fai)$, and $\ub_\fai(s)$ is an isometry since for all $a\in A$,
$$\langle \widehat{\alpha_s(a)}, \widehat{\alpha_s(a)}\rangle=\fai(\alpha_s(a)^*\alpha_s(a))=\fai(\alpha_s(a^*a))=\fai(a^*a)=\langle \hat{a},\hat{a}\rangle.$$ So $\ub_\fai(s)$ is a unitary.
\item $\ub_\fai(st)(\hat{a})=\widehat{\alpha_{st}(a)}=\ub_s(\ub_t(\hat{a}))$ for all $s,t\in\Gamma$ and $a\in A$.
\end{enumerate}

Hence $\ub_\fai$ is a unitary representation of $\Gamma$ on $L^2(A,\fai)$.

Furthermore we have the following.
\begin{lemma}~\label{covariant}
Given a $\Gamma$-invariant state $\fai$ on $A$, the triple $(\pi_\fai,\ub_\fai,L^2(A,\fai))$ gives a covariant representation of $(A,\Gamma,\alpha)$. So there is a representation of $A\rtimes\Gamma$ on $L^2(A,\fai)$, which we denote by $\rho_\fai$,  given by $$\rho_\fai(\sum_{s\in\Gamma}a_su_s)=\sum_{s\in\Gamma}\pi_\fai(a_s)\ub_\fai(s)$$ for any $\sum_{s\in\Gamma}a_su_s\in C_c(\Gamma,A)$.
\end{lemma}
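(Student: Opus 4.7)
The plan is to verify the defining covariance relation
$$\pi_\fai(\alpha_s(a)) = \ub_\fai(s)\,\pi_\fai(a)\,\ub_\fai(s)^*$$
for all $s \in \Gamma$ and $a \in A$, and then invoke the standard correspondence between covariant representations of $(A,\Gamma,\alpha)$ and representations of the full crossed product $A \rtimes \Gamma$ (recalled in the preliminaries) to obtain $\rho_\fai$.

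First I would check the covariance on the dense subspace $\{\hat{b} : b \in A\}$ of $L^2(A,\fai)$. Recall that $\pi_\fai(a)\hat{b} = \widehat{ab}$ by the GNS construction, and that $\ub_\fai(s)$ is unitary by the three bullet points already verified above, so $\ub_\fai(s)^* = \ub_\fai(s^{-1})$. A direct computation gives
$$\ub_\fai(s)\,\pi_\fai(a)\,\ub_\fai(s)^* \hat{b} = \ub_\fai(s)\,\pi_\fai(a)\,\widehat{\alpha_{s^{-1}}(b)} = \ub_\fai(s)\,\widehat{a\,\alpha_{s^{-1}}(b)} = \widehat{\alpha_s(a)\,b} = \pi_\fai(\alpha_s(a))\hat{b},$$
using that $\alpha_s$ is a homomorphism and $\alpha_s \circ \alpha_{s^{-1}} = \mathrm{id}$. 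Since both sides are bounded operators agreeing on a dense set, equality holds on all of $L^2(A,\fai)$, which gives the covariance relation.

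Second, with covariance in hand, the pair $(\pi_\fai, \ub_\fai)$ qualifies as a covariant representation of $(A,\Gamma,\alpha)$ on $L^2(A,\fai)$. By the standard construction already recalled before Definition~\ref{crossed product}, one defines a $*$-representation of the $*$-algebra $C_c(\Gamma,A)$ on $L^2(A,\fai)$ by the formula
$$\rho_\fai\Bigl(\sum_{s\in\Gamma} a_s u_s\Bigr) = \sum_{s\in\Gamma} \pi_\fai(a_s)\,\ub_\fai(s),$$
the sums being finite. One verifies by routine manipulation that this respects the convolution product and adjoint of $C_c(\Gamma,A)$; the covariance relation is exactly what is needed to make these verifications go through. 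Since $A \rtimes \Gamma$ is defined as the completion of $C_c(\Gamma, A)$ under the supremum norm over all such representations, $\rho_\fai$ extends by continuity to a representation of $A \rtimes \Gamma$ on $L^2(A,\fai)$.

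There is no real obstacle here: the statement is essentially a combination of the well-known GNS construction with the universal property of the full crossed product. The only point requiring genuine care is the bookkeeping with $s$ versus $s^{-1}$ in the unitary implementation, which is handled in the covariance computation above.
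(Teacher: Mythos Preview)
Your proof is correct and follows essentially the same route as the paper: both verify the covariance relation $\pi_\fai(\alpha_s(a))=\ub_\fai(s)\pi_\fai(a)\ub_\fai(s^{-1})$ by the same direct computation on vectors $\hat{b}$, and then appeal to the universal property of the full crossed product to obtain $\rho_\fai$. The paper's proof is terser, ending immediately after the covariance computation, whereas you spell out the passage to $\rho_\fai$ in slightly more detail, but the argument is the same.
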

\begin{proof}
For all $a,b\in A$ and $s\in\Gamma$, we have
\begin{align*}
&\ub_\fai(s)(\pi_\fai(a))\ub_\fai(s^{-1})(\hat{b})=\ub_\fai(s)(\pi_\fai(a))((\alpha_{s^{-1}}(b))^\wedge) \\
&=\ub_\fai(s)((a\alpha_{s^{-1}}(b))^\wedge)=\alpha_s((a\alpha_{s^{-1}}(b))^\wedge)=(\alpha_s(a)\alpha_{ss^{-1}}(b))^\wedge       \\
&=(\alpha_s(a)b)^\wedge=\pi_\fai(\alpha_s(a))(\hat{b}).
\end{align*}
This completes the proof.
\end{proof}

\subsection{$\Gamma$-invariant states on $A$ and states on $A\rtimes\Gamma$}\

Denote $\{\fai\in S(A\rtimes\Gamma)\,|\,\fai(u_s)=1 \, \rm{for \,all} \,s\in\Gamma\}$ by $S^1(A\rtimes\Gamma)$ and $\{\psi\in P(A\rtimes\Gamma)|\psi(u_s)=1\, \rm{for\, all}\, s\in\Gamma\}$ by $P^1(A\rtimes\Gamma)$.

We have the following.

\begin{theorem}~\label{homeomorphism between states}
When equipped with  weak-$*$ topologies, the restriction maps $R: S_\Gamma(A)\to S^1(A\rtimes\Gamma)$ and $R: E_\Gamma(A)\to P^1(A\rtimes\Gamma)$ are homoemorphisms.
\end{theorem}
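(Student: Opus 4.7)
The plan is to construct an explicit inverse to $R$ (regarded as the restriction $\psi\mapsto\psi|_A$ from $S^1(A\rtimes\Gamma)$ onto $S_\Gamma(A)$) via the covariant representation of Lemma~\ref{covariant}, and then verify weak-$*$ bicontinuity and matching of extreme points. Given $\fai\in S_\Gamma(A)$, set
\[
E(\fai)(x) := \langle \rho_\fai(x)\hat{1},\hat{1}\rangle,\qquad x\in A\rtimes\Gamma.
\]
Since $\ub_\fai(s)\hat{1} = \widehat{\alpha_s(1)} = \hat{1}$, we have $E(\fai)(u_s)=1$, so $E(\fai)\in S^1(A\rtimes\Gamma)$, and $E(\fai)(a)=\fai(a)$ gives $R\circ E=\mathrm{id}$. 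Conversely, for $\psi\in S^1(A\rtimes\Gamma)$ with GNS triple $(\pi_\psi,H_\psi,\xi_\psi)$, the key rigidity step is $\pi_\psi(u_s)\xi_\psi = \xi_\psi$: this follows from the equality case of Cauchy--Schwarz, since $1=\psi(u_s)=\langle\pi_\psi(u_s)\xi_\psi,\xi_\psi\rangle$ while $\|\pi_\psi(u_s)\xi_\psi\|=\|\xi_\psi\|=1$. A direct computation then gives $\psi(\alpha_s(a))=\psi(u_s a u_s^*)=\psi(a)$ and $\psi(au_s)=\langle\pi_\psi(a)\xi_\psi,\xi_\psi\rangle=\psi|_A(a)=E(\psi|_A)(au_s)$, so $\psi|_A\in S_\Gamma(A)$ and, by density in $A\rtimes\Gamma$, $E\circ R=\mathrm{id}$.

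Weak-$*$ continuity of $R$ is immediate. For $E$, convergence $\fai_n\to\fai$ yields $E(\fai_n)\to E(\fai)$ on $C_c(\Gamma,A)$ monomial by monomial, which combined with the uniform bound $\|E(\fai_n)\|\leq 1$ and density propagates to all of $A\rtimes\Gamma$ by a standard $\varepsilon/3$ argument. For the ergodic/pure correspondence, $E$ is affine (visible on the dense subalgebra and extended by density), and $S^1(A\rtimes\Gamma)$ is a face of $S(A\rtimes\Gamma)$: if $E(\fai)=t\psi_1+(1-t)\psi_2$ with $\psi_i\in S(A\rtimes\Gamma)$ and $t\in(0,1)$, then $1=t\psi_1(u_s)+(1-t)\psi_2(u_s)$ with $|\psi_i(u_s)|\leq 1$ forces $\psi_i(u_s)=1$, since $1$ is an extreme point of the closed unit disk in $\mathbb{C}$; hence each $\psi_i\in S^1(A\rtimes\Gamma)$. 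Combining the facial property with the affine bijection, extreme points match on both sides, giving $\fai\in E_\Gamma(A)$ iff $E(\fai)\in P^1(A\rtimes\Gamma)$.

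The only genuinely nontrivial ingredient is the Cauchy--Schwarz rigidity step forcing $\pi_\psi(u_s)\xi_\psi=\xi_\psi$ from $\psi(u_s)=1$; every other claim in the proof reduces to routine bookkeeping with the GNS construction and the universal property of $A\rtimes\Gamma$.
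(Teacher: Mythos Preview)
Your proof is correct and follows essentially the same route as the paper's: the key rigidity step (your Cauchy--Schwarz equality forcing $\pi_\psi(u_s)\xi_\psi=\xi_\psi$) is exactly the content of the paper's Lemma~\ref{ResInv}, the extension map $E$ is constructed the same way via $\rho_\fai$, and the face argument for $S^1(A\rtimes\Gamma)\subset S(A\rtimes\Gamma)$ is identical. The only difference is that the paper sidesteps your $\varepsilon/3$ continuity check of $E$ by invoking that a continuous bijection between compact Hausdorff spaces is automatically a homeomorphism, which is slightly cleaner but not substantively different.
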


To prove this theorem, we first prove the following lemma which says that for every $\fai$ in $S^1(A\rtimes\Gamma)$, the restriction $\fai|_A$ belongs to $S_\Gamma(A)$.

\begin{lemma}~\label{ResInv}
For any state $\fai$ on $A\rtimes\Gamma$ such that $\fai(u_s)=1$ for every $s\in\Gamma$, we have $\fai(u_s au_t)=\fai(a)$ for all $a\in A$ and $s,t\in\Gamma$. Consequently the restriction $\fai|_A$  is a $\Gamma$-invariant state  on $A$.
\end{lemma}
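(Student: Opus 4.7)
The plan is to exploit the Cauchy--Schwarz inequality for states in the form $|\varphi(yx)|^2 \le \varphi(yy^*)\varphi(x^*x)$, applied to $y = u_s - 1$ (and analogously on the right for $u_t$). The crucial observation is that the hypothesis $\varphi(u_s) = 1$ for every $s\in\Gamma$ forces $\varphi$ to vanish on $(u_s-1)(u_s-1)^*$: since $u_s^* = u_{s^{-1}}$ and $s^{-1}\in\Gamma$, we also have $\varphi(u_s^*) = 1$, so
\[
\varphi((u_s-1)(u_s-1)^*) \;=\; \varphi(u_su_s^*) - \varphi(u_s) - \varphi(u_s^*) + \varphi(1) \;=\; 1-1-1+1 \;=\; 0.
\]

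Given this, Cauchy--Schwarz immediately yields $|\varphi((u_s - 1)x)|^2 \le 0$, i.e.\ $\varphi(u_s x) = \varphi(x)$ for every $x\in A\rtimes\Gamma$. A symmetric computation with $\varphi((u_t - 1)^*(u_t - 1)) = 0$ and the reversed Cauchy--Schwarz (or by applying the first step to the element $xu_t = (u_{t^{-1}}x^*)^*$ and using positivity/conjugation) gives $\varphi(xu_t) = \varphi(x)$. Combining both, I obtain $\varphi(u_s a u_t) = \varphi(a u_t) = \varphi(a)$ for all $a\in A$ and $s,t\in\Gamma$, which is the first assertion.

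For the second assertion, recall that in $A\rtimes\Gamma$ the covariance relation reads $u_s a u_s^* = \alpha_s(a)$, i.e.\ $\alpha_s(a) = u_s a u_{s^{-1}}$. Setting $t = s^{-1}$ in what has just been proved gives
\[
\varphi|_A(\alpha_s(a)) \;=\; \varphi(u_s a u_{s^{-1}}) \;=\; \varphi(a) \;=\; \varphi|_A(a),
\]
so $\varphi|_A$ is $\Gamma$-invariant; it is clearly still a state since $1_A = 1_{A\rtimes\Gamma}$. The only step requiring any care is verifying the right-handed absorption $\varphi(xu_t) = \varphi(x)$, but this is purely formal once the left-handed version is established, so no real obstacle is anticipated.
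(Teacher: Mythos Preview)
Your proof is correct and follows essentially the same route as the paper: both use Cauchy--Schwarz together with the key computation $\varphi((u_s-1)(u_s-1)^*)=\varphi((u_s-1)^*(u_s-1))=0$ to kill the left and right $u_s,u_t$ factors. The paper combines the two absorptions in a single estimate via the triangle inequality rather than treating them separately, but the substance is identical.
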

\begin{proof}
 The proof  follows from ~\cite[Proposition 1.5.7.]{BrownOzawa2008} since by assumption  every $u_s$ is contained in the multiplicative domain of $\fai$.

For convenience of readers, we give a direct proof which is also based on Cauchy-Schwarz inequality like~\cite[Proposition 1.5.7.]{BrownOzawa2008}.

For all $a\in A$ and $s,t\in\Gamma$, we have
\begin{align*}
&|\fai(u_s au_t)-\fai(a)|\leq |\fai(u_s au_t)-\fai(au_t)|+|\fai(au_t)-\fai(a)| \\
&=|\fai((u_s-1) au_t)|+|\fai(a(u_t-1))|    \\
&=|\langle au_t, (u_s-1)^*\rangle_{L^2(A\rtimes\Gamma,\fai)}|+|\langle u_t-1, a^*\rangle_{L^2(A\rtimes\Gamma,\fai)}| \\
\tag{Cauchy-Schwarz Inequality}
&\leq [\fai((u_t)^*a^*au_t)]^{\frac{1}{2}}[\fai((u_s-1)(u_s-1)^*)]^{\frac{1}{2}}+[\fai((u_t-1)^*(u_t-1))]^{\frac{1}{2}}[\fai(aa^*)]^{\frac{1}{2}} \\
\tag{$\fai(u_t)=1$ implies that $\fai((u_t-1)(u_t-1)^*)=0$ and $\fai((u_t-1)^*(u_t-1))=0$.}
&=0.
\end{align*}

\end{proof}

It follows from Lemma~\ref{covariant} that for a $\Gamma$-invariant state $\fai$, there is a representation $\rho_\fai$ of $A\rtimes\Gamma$ on $L^2(A,\fai)$ given by
$$\rho_\fai(\sum_{s\in\Gamma}a_su_s)=\sum_{s\in\Gamma}\pi_\fai(a_s)\ub_\fai(s)$$ for every $\sum_{s\in\Gamma}a_su_s\in C_c(\Gamma,A)$.

\begin{proof}~[Proof of Theorem~\ref{homeomorphism between states}.]\

By Lemma~\ref{ResInv}, the restriction map $R:S^1(A\rtimes\Gamma)\to S_\Gamma(A)$ given by $R(\fai)=\fai|_A$ for every $\fai\in S^1(A\rtimes\Gamma)$, is well-defined. Since $A$ is a $C^*$-subalgebra of $A\rtimes\Gamma$, the map $R$ is continuous under weak-$*$ topology.

If $R(\fai_1)=R(\fai_2)$ for $\fai_1,\fai_2\in S^1(A\rtimes\Gamma)$, then $\fai_1(a)=\fai_2(a)$ for all $a\in A$. By Lemma~\ref{ResInv},
$$\fai_1(au_s)=\fai_2(au_s),$$ for all $a\in A$ and $s\in\Gamma$. Since every element in $C_c(\Gamma,A)$ is a linear combination of $au_s$ and $C_c(\Gamma,A)$ is a dense subspace of $A\rtimes\Gamma$.  Hence $\fai_1=\fai_2$ and $R$ is injective.

Moreover given a $\fai\in S_\Gamma(A)$. By Lemma~\ref{covariant}, $\fai$ gives a representation $\rho_\fai$ of $A\rtimes\Gamma$ on $L^2(A,\fai)$. Let $\tilde{\fai}$ be the state of $A\rtimes\Gamma$ given by
$\tilde(\fai)(b)=\langle \rho_\fai(b)(\hat{1}),\hat{1}\rangle$ for all $b\in A\rtimes\Gamma$. By the definition of $\rho_\fai$, we see that $\tilde{\fai}\in S^1(A\rtimes\Gamma)$ and $\tilde{\fai}|_A=\fai$. This shows the surjectivity of $R$.

Above all $R$ is a bijective continuous map between two compact Hausdorff spaces $S^1(A\rtimes\Gamma)$ and $S_\Gamma(A)$. Therefore $R$ is a homeomorphism.

Note that $R$ is an affine map between two convex spaces $S^1(A\rtimes\Gamma)$ and $S_\Gamma(A)$, so the set of extreme points of $S^1(A\rtimes\Gamma)$ is homeomorphic to  $E_\Gamma(A)$.

Suppose that $\fai$ is an extreme point of $S^1(A\rtimes\Gamma)$ and $\fai=\lambda\fai_1+(1-\lambda)\fai_2$ for two states $\fai_1,\fai_2$ on $A\rtimes\Gamma$ and some $0<\lambda<1$. Then
$1=\fai(u_s)=\lambda\fai_1(u_s)+(1-\lambda)\fai_2(u_s)$ for every $s\in\Gamma$. It follows that $\fai_1(u_s)=\fai_2(u_s)=1$, that is, $\fai_1,\fai_2\in S^1(A\rtimes\Gamma)$. Hence $\fai=\fai_1=\fai_2$ and $\fai$ is a pure state.
\end{proof}

For a character $\xi$ on $\Gamma$~(a group homomorphism from $\Gamma$ to $\mathbb{T}$), denote $\{\fai\in S(A\rtimes\Gamma)|\fai(u_s)=\xi(s) \, \rm{for \,all} \,s\in\Gamma\}$ by $S^\xi(A\rtimes\Gamma)$ and  $\{\fai\in P(A\rtimes\Gamma)|\fai(u_s)=\xi(s) \, \rm{for \,all} \,s\in\Gamma\}$ by $P^\xi(A\rtimes\Gamma)$.

For a representation of $A\rtimes\Gamma$ on a Hilbert space $H$, define
$H_\xi=\{x\in H|\pi(u_s)(x)=\xi(s)x\, {\rm for \,all}\, s\in\Gamma\}$.

We have the following improvement of Theorem~\ref{homeomorphism between states}.
\begin{corollary}~\label{homeomorphism between states abelian}
Let $\xi$ be a character on $\Gamma$. When equipped with weak-$*$ topologies, $S_\Gamma(A)\cong S^\xi(A\rtimes\Gamma)$ and $E_\Gamma(A)\cong P^\xi(A\rtimes\Gamma)$.
\end{corollary}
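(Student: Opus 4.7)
The plan is to reduce Corollary~\ref{homeomorphism between states abelian} to Theorem~\ref{homeomorphism between states} by transporting states along a suitable automorphism of $A\rtimes\Gamma$ induced by the character $\xi$. Concretely, I would define a $*$-automorphism $\gamma_\xi$ of $A\rtimes\Gamma$ that is the identity on $A$ and sends $u_s$ to $\xi(s)u_s$. The existence of $\gamma_\xi$ follows from the universal property of the full crossed product: the pair consisting of the inclusion $A\hookrightarrow A\rtimes\Gamma$ together with the unitaries $s\mapsto \xi(s)u_s$ forms a covariant representation of $(A,\Gamma,\alpha)$, since $\xi(s)u_s\cdot a\cdot (\xi(s)u_s)^*=u_sau_s^*=\alpha_s(a)$ and $s\mapsto \xi(s)u_s$ is a homomorphism because $\xi$ is multiplicative. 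The inverse of $\gamma_\xi$ is built analogously from the conjugate character $\overline\xi$, so $\gamma_\xi\in\operatorname{Aut}(A\rtimes\Gamma)$.

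Next I would define the transport map $T_\xi:S^1(A\rtimes\Gamma)\to S^\xi(A\rtimes\Gamma)$ by $T_\xi(\psi)=\psi\circ\gamma_\xi^{-1}$. A direct check gives $T_\xi(\psi)(u_s)=\psi(\overline{\xi(s)}u_s)=\overline{\xi(s)}\cdot 1=\overline{\xi(s)}$; I adjust signs by replacing $\gamma_\xi$ by $\gamma_{\overline\xi}$ if needed so that $T_\xi$ actually lands in $S^\xi$. Since $\gamma_\xi$ is a $*$-automorphism of $A\rtimes\Gamma$, precomposition with $\gamma_\xi^{\pm 1}$ is a weak-$*$ homeomorphism of the full state space $S(A\rtimes\Gamma)$ that preserves $P(A\rtimes\Gamma)$, so $T_\xi$ restricts to a homeomorphism $S^1(A\rtimes\Gamma)\xrightarrow{\sim} S^\xi(A\rtimes\Gamma)$ sending $P^1(A\rtimes\Gamma)$ homeomorphically onto $P^\xi(A\rtimes\Gamma)$.

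Finally, because $\gamma_\xi$ fixes $A$ pointwise, one has $(T_\xi\psi)|_A=\psi|_A$, so $T_\xi$ intertwines the restriction maps $R:S^1(A\rtimes\Gamma)\to S_\Gamma(A)$ and $R:S^\xi(A\rtimes\Gamma)\to S_\Gamma(A)$. Composing $T_\xi$ with the homeomorphisms of Theorem~\ref{homeomorphism between states} then yields both $S_\Gamma(A)\cong S^\xi(A\rtimes\Gamma)$ and $E_\Gamma(A)\cong P^\xi(A\rtimes\Gamma)$.

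The only real obstacle is verifying that $\gamma_\xi$ is well-defined on the full crossed product, which amounts to checking the covariance relation is preserved by the twist $u_s\mapsto \xi(s)u_s$; this is a short calculation using that $\xi$ takes values in the center $\mathbb{T}\subset Z(A\rtimes\Gamma)$. Everything else is formal: the restriction map is already known to be a homeomorphism on $S^1$ by Theorem~\ref{homeomorphism between states}, and the identification of extreme points is automatic because the argument for $P^1$ given in the proof of Theorem~\ref{homeomorphism between states} goes through verbatim after replacing $\psi(u_s)=1$ with $\psi(u_s)=\xi(s)$ (each value $\xi(s)$ is again an extreme point of the closed unit disk, so the convex-combination argument forces $\fai_1(u_s)=\fai_2(u_s)=\xi(s)$).
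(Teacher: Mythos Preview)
Your proposal is correct and follows essentially the same route as the paper: the paper also constructs the automorphism $\Lambda:A\rtimes\Gamma\to A\rtimes\Gamma$ given on $C_c(\Gamma,A)$ by $\Lambda(\sum_s a_su_s)=\sum_s \xi(s)a_su_s$ (your $\gamma_\xi$) and uses the induced homeomorphism $S^1(A\rtimes\Gamma)\cong S^\xi(A\rtimes\Gamma)$ together with Theorem~\ref{homeomorphism between states}. Your write-up is more careful about the universal-property justification and the direction of the transport (whether to compose with $\gamma_\xi$ or $\gamma_\xi^{-1}$), but the argument is the same.
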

\begin{proof}
By Theorem~\ref{homeomorphism between states}, $S_\Gamma(A)\cong S^1(A\rtimes\Gamma)$. Note that $S^1(A\rtimes\Gamma)\cong S^\xi(A\rtimes\Gamma)$~($P^1(A\rtimes\Gamma)\cong P^\xi(A\rtimes\Gamma)$ follows) and the homeomorphism is induced by the isomorphism
$\Lambda: A\rtimes\Gamma\to A\rtimes\Gamma$ given by $\Lambda(\sum_{s\in\Gamma} a_su_s)=\sum_{s\in\Gamma} \xi(s)a_su_s$ for all $\sum_{s\in\Gamma} a_su_s\in C_c(\Gamma,A)$.
\end{proof}

For a representation $\pi:A\rtimes\Gamma\to B(H)$, denote $\{x\in H|\pi(u_s)(x)=x\, {\rm for \,all}\, s\in\Gamma\}$ by $H_\Gamma$.
\begin{proposition}~\label{nonempty}
For a $C^*$-dynamical system $(A,\Gamma,\alpha)$, the following are equivalent.
\begin{enumerate}
\item  The set $S_\Gamma(A)$ is nonempty.
\item  The canonical homomorphism $C^*(\Gamma)\to A\rtimes\Gamma$ is an embedding.
\item There exists a representation $\pi: A\rtimes\Gamma\to B(H)$ such that $H_\Gamma\neq 0$, or equivalently, there exists a covariant representation $(\pi, U,H)$ of $(A,\Gamma,\alpha)$ such that $U$ contains the trivial representation of $\Gamma$.
\end{enumerate}
\end{proposition}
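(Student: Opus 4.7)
The plan is to close the cycle $(1)\Rightarrow(2)\Rightarrow(3)\Rightarrow(1)$; the equivalence of (1) and (3) is direct, and the real content lies in $(1)\Rightarrow(2)$. For $(3)\Rightarrow(1)$, given $\pi\colon A\rtimes\Gamma\to B(H)$ and a unit vector $\xi\in H_\Gamma$, the vector state $\omega_\xi(a):=\langle\pi(a)\xi,\xi\rangle$ on $A\subset A\rtimes\Gamma$ is $\Gamma$-invariant: by covariance $\omega_\xi(\alpha_s(a))=\langle\pi(u_s)\pi(a)\pi(u_s)^*\xi,\xi\rangle$, and since $\pi(u_s)$ is unitary with $\pi(u_s)\xi=\xi$ this collapses to $\langle\pi(a)\xi,\xi\rangle=\omega_\xi(a)$.

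The main step is $(1)\Rightarrow(2)$: starting from $\varphi\in S_\Gamma(A)$, I would use the Koopman data to promote an arbitrary unitary representation $V\colon\Gamma\to B(K)$ to a covariant representation of $(A,\Gamma,\alpha)$. Concretely, on $L^2(A,\varphi)\otimes K$ let $A$ act by $\pi_\varphi\otimes\mathbf{1}_K$ and $\Gamma$ by $\ub_\varphi\otimes V$; covariance is immediate from the covariance of $(\pi_\varphi,\ub_\varphi)$, so this yields a representation $\rho_V$ of $A\rtimes\Gamma$. Because $\ub_\varphi(s)\hat 1=\hat 1$, the subspace $\mathbb{C}\hat 1\otimes K$ is $\rho_V(u_s)$-invariant and the restriction of $\rho_V(u_s)$ to it equals $V(s)$ under the obvious identification. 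Writing $\iota\colon C^*(\Gamma)\to A\rtimes\Gamma$ for the canonical map, this yields $\|V(x)\|\le\|\rho_V(\iota(x))\|\le\|\iota(x)\|$ for every $x\in C^*(\Gamma)$. Taking the supremum over all unitary representations $V$ of $\Gamma$ gives $\|x\|_{C^*(\Gamma)}\le\|\iota(x)\|$, and together with the automatic reverse inequality $\iota$ becomes isometric and hence injective. The subtle point, and the main obstacle to a quicker argument, is that a bare unitary representation of $\Gamma$ has no canonical promotion to a covariant representation of the full system; the Koopman tensor trick above is exactly what supplies the missing ingredient.

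For $(2)\Rightarrow(3)$, once $\iota$ is known to be an embedding, identify $C^*(\Gamma)$ with its image inside $A\rtimes\Gamma$. The trivial character $\tau\colon C^*(\Gamma)\to\mathbb{C}$, $u_s\mapsto 1$, is a state on this subalgebra, and Hahn--Banach extends it to a state $\tilde\tau$ of $A\rtimes\Gamma$ with $\tilde\tau(u_s)=1$ for every $s\in\Gamma$. The GNS representation of $\tilde\tau$ produces $\pi\colon A\rtimes\Gamma\to B(H)$ whose cyclic vector $\xi$ is $\Gamma$-invariant by the equality case of Cauchy--Schwarz applied to $\tilde\tau(u_s)=1=\|\xi\|^2$, so $H_\Gamma\neq 0$. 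Alternatively, Theorem~\ref{homeomorphism between states} applied to $\tilde\tau\in S^1(A\rtimes\Gamma)$ returns a $\Gamma$-invariant state on $A$, and then $(1)\Rightarrow(3)$ is witnessed by the representation $\rho_\varphi$ with its fixed vector $\hat 1$.
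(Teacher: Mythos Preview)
Your argument is correct, and for $(2)\Rightarrow(3)$ and $(3)\Rightarrow(1)$ it matches the paper's essentially word for word (the paper also extends the trivial character by Hahn--Banach and then reads off either an element of $S^1(A\rtimes\Gamma)$ or, via GNS, a $\Gamma$-fixed cyclic vector).

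The genuine difference is in $(1)\Rightarrow(2)$. The paper does not build representations at all here: it observes that an invariant state $\varphi$ is a $\Gamma$-equivariant unital completely positive map $A\to\mathbb{C}$, and then invokes the general fact (Brown--Ozawa, Exercise~4.1.4) that such a map induces a contractive completely positive map $\tilde\varphi\colon A\rtimes\Gamma\to \mathbb{C}\rtimes\Gamma=C^*(\Gamma)$ sending $\sum a_s u_s$ to $\sum\varphi(a_s)u_s$. The composition $C^*(\Gamma)\to A\rtimes\Gamma\xrightarrow{\tilde\varphi}C^*(\Gamma)$ is the identity, so the canonical map is injective. Your route instead \emph{amplifies} the Koopman representation: tensoring $(\pi_\varphi,\ub_\varphi)$ with an arbitrary unitary representation $V$ of $\Gamma$ still gives a covariant pair, and because $\hat 1$ is $\ub_\varphi$-fixed, the copy of $V$ sits inside the integrated form restricted to $C^*(\Gamma)$; taking the supremum over $V$ recovers the full $C^*(\Gamma)$-norm. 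What your approach buys is self-containment---no appeal to the crossed-product functoriality of equivariant c.c.p.\ maps---while the paper's approach buys a stronger conclusion for free, namely an explicit c.c.p.\ conditional expectation $A\rtimes\Gamma\to C^*(\Gamma)$, which is a bit more than the bare embedding statement requires.
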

\begin{proof}
$(1)\Longrightarrow(2)$.

Take a $\fai\in S_\Gamma(A)$.
Let $\Gamma$ act on $\mathbb{C}$ trivially. By the invariance, the map $\fai:A\to\mathbb{C}$ is a $\Gamma$-equivariant contractive completely positive map. By~\cite[Exercise 4.1.4]{BrownOzawa2008},  there exists a contractive completely positive map $\tilde{\fai}: A\rtimes\Gamma\to C^*(\Gamma)$ such that $\tilde{\fai}(\sum_{s\in\Gamma} a_su_s)=\sum_{s\in\Gamma} \fai(a_s)u_s.$ Immediately one can check that the composition of maps $$C^*(\Gamma)\to A\rtimes\Gamma \underset{\tilde{\fai}}{\rightarrow} C^*(\Gamma)$$ is the identity map.  Hence the canonical homomorphism $C^*(\Gamma)\to A\rtimes\Gamma$ is an embedding.

$(2)\Longrightarrow(1)$.

By Theorem~\ref{homeomorphism between states}, it suffices to show $S^1(A\rtimes\Gamma)$ is nonempty.

Let $\pi_0:\Gamma\to\mathbb{C}$ be the trivial unitary representation of $\Gamma$ on $\mathbb{C}$. Then $\pi_0$ is a state on $C^*(\Gamma)$ such that $\pi_0(u_s)=1$ for every $s\in\Gamma$. Note that $C^*(\Gamma)$ is a Banach subspace of $A\rtimes\Gamma$. By the Hahn-Banach Theorem, one can extend $\pi_0$ to a bounded linear functional $\fai$ on $A\rtimes\Gamma$ without changing its norm~\cite[Corollary 6.5]{Conway1990}. Hence $\|\fai\|=\|\pi_0\|=1=\pi_0(1)=\fai(1)$. So $\fai$ is a state on $A\rtimes\Gamma$~\cite[Theorem 4.3.2]{KadisonRingrose1997I}, and satisfies that $\fai(u_s)=1$ for all $s\in\Gamma$.

$(1)\Longrightarrow(3)$.

This is guaranteed by Lemma~\ref{covariant}.

$(3)\Longrightarrow(1)$.

Take a unit vector $x$ in $H_\Gamma$ and define a state $\fai$ on $A\rtimes\Gamma$ by $\fai(b)=\langle\pi(b)x,x\rangle$ for all $b\in A\rtimes\Gamma$. It follows that $\fai\in S^1(A\rtimes\Gamma)$.
\end{proof}

\begin{remark}
The equivalence of (1) and (2) may be well-known.  When $A$ is commutative and $\Gamma$ is locally compact, this is mentioned in~\cite[Remark 7.5]{WillettYu2014}. To the best of our knowledge, it does not appear elsewhere in the literature.
\end{remark}

Notice that $\ub_\fai$ gives rise to an action of $\Gamma$ on $B(L^2(A,\fai))$, also denoted by $\alpha$  for convenience, defined by
$$\alpha_s(T)=\ub_\fai(s)T\ub_\fai(s^{-1})$$ for every $T\in B(L^2(A,\fai))$ and $s\in\Gamma$.

Denote $\langle T(\hat{1}),\hat{1}\rangle$ by $\fai(T)$ for all $T\in B(L^2(A,\fai))$. When $\fai$ is a $\Gamma$-invariant state on $A$, it is also a $\Gamma$-invariant state on $B(L^2(A,\fai))$ since
\begin{align*}
&\fai(\alpha_s(T))=\fai(\ub_\fai(s)T\ub_\fai(s^{-1}))=\langle \ub_\fai(s)T\ub_\fai(s^{-1})(\hat{1}),\hat{1}\rangle   \\
&=\langle T\ub_\fai(s^{-1})(\hat{1}),\ub_\fai(s^{-1})(\hat{1})\rangle=\langle T(\hat{1}),\hat{1}\rangle=\fai(T)
\end{align*}
for all $s\in\Gamma$.

We can also see that
\begin{equation}~\label{equivariant}
\alpha_s(\pi_\fai(a))=\pi_\fai(\alpha_s(a))
\end{equation}
for every $a\in A$ and $s\in\Gamma$.

We call a $T\in B(L^2(A,\fai))$ is {\bf $\Gamma$-invariant} if $\alpha_s(T)=T$ for all $s\in\Gamma$. Denote the set of $\Gamma$-invariant operators in $B(L^2(A,\fai))$ by $B(L^2(A,\fai))_\Gamma$. Let
$$\pi_\fai(A)'=\{T\in B(L^2(A,\fai))\,|\, T\pi_\fai(a)=\pi_\fai(a)T\, {\rm for\, all}\, a\in A\}.$$

\begin{proposition}~\label{ergodic via projections}
A $\Gamma$-invariant state $\fai$ on $A$ is ergodic  iff  $\fai(T^*T)=|\fai(T)|^2$ for every $T\in B(L^2(A,\fai))_\Gamma\cap \pi_\fai(A)'$.
\end{proposition}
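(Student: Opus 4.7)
The plan is to relate the condition to irreducibility of the GNS representation of the extended state on $A\rtimes\Gamma$. Let $\tilde{\fai}\in S^1(A\rtimes\Gamma)$ be the state produced by Theorem~\ref{homeomorphism between states}, i.e.\ $\tilde{\fai}(b)=\langle \rho_\fai(b)\hat{1},\hat{1}\rangle$ for $b\in A\rtimes\Gamma$. Because $\hat{1}$ is already cyclic for $\pi_\fai(A)\subset \rho_\fai(A\rtimes\Gamma)$, the triple $(\rho_\fai,L^2(A,\fai),\hat{1})$ is the GNS triple of $\tilde{\fai}$. By Theorem~\ref{homeomorphism between states}, $\fai$ is ergodic iff $\tilde{\fai}$ is a pure state iff $\rho_\fai$ is irreducible iff the commutant $\rho_\fai(A\rtimes\Gamma)'$ equals $\mathbb{C}I$.

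Next I would identify this commutant: a bounded operator $T$ on $L^2(A,\fai)$ lies in $\rho_\fai(A\rtimes\Gamma)'$ exactly when it commutes with every $\pi_\fai(a)$ and every $\ub_\fai(s)$. The second condition is precisely $\alpha_s(T)=T$ for all $s\in\Gamma$, so
\[
\rho_\fai(A\rtimes\Gamma)' = B(L^2(A,\fai))_\Gamma\cap \pi_\fai(A)'.
\]
Thus ergodicity of $\fai$ is equivalent to this intersection being $\mathbb{C}I$.

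For the forward direction this is immediate: if every $T$ in this commutant is a scalar $\lambda I$, then $\fai(T)=\lambda$ and $\fai(T^*T)=|\lambda|^2=|\fai(T)|^2$.

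For the converse, suppose the identity $\fai(T^*T)=|\fai(T)|^2$ holds for every $T\in B(L^2(A,\fai))_\Gamma\cap \pi_\fai(A)'$. Given such a $T$, set $S=T-\fai(T)I$, which still lies in the intersection. Then $\fai(S)=0$, so by hypothesis $\|S\hat{1}\|^2=\fai(S^*S)=0$, i.e.\ $S\hat{1}=0$. Since $S\in\pi_\fai(A)'$, we obtain $S\hat{a}=S\pi_\fai(a)\hat{1}=\pi_\fai(a)S\hat{1}=0$ for every $a\in A$, and the density of $\{\hat{a}:a\in A\}$ in $L^2(A,\fai)$ forces $S=0$. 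Hence $T=\fai(T)I$, the commutant is trivial, and $\fai$ is ergodic.

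The only non-routine step is the commutant identification in the second paragraph, but it follows at once from the definition of $\rho_\fai$ in Lemma~\ref{covariant}; after that, the argument is a standard GNS manipulation exploiting that $\hat{1}$ is a cyclic vector for $\pi_\fai(A)$.
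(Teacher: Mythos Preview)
Your proof is correct and follows essentially the same route as the paper: identify the commutant $\rho_\fai(A\rtimes\Gamma)'$ with $B(L^2(A,\fai))_\Gamma\cap\pi_\fai(A)'$, invoke Theorem~\ref{homeomorphism between states} to reduce ergodicity to purity of the extended state, and for the converse use the hypothesis to show $T\hat{1}=\fai(T)\hat{1}$ and then propagate via cyclicity of $\hat{1}$ for $\pi_\fai(A)$. The only cosmetic difference is that the paper phrases the commutant as $\pi_\psi(A\rtimes\Gamma)'$ (implicitly identifying $\pi_\psi$ with $\rho_\fai$), whereas you work directly with $\rho_\fai$ and note that $(\rho_\fai,L^2(A,\fai),\hat{1})$ realizes the GNS triple of $\tilde{\fai}$; your formulation is arguably cleaner at this point.
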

\begin{proof}
Recall that $R:S^1(A\rtimes\Gamma)\to S_\Gamma(A)$ is the restriction map. For $\fai\in S_\Gamma(A)$, denote $R^{-1}(\fai)$ by $\psi$. By Theorem~\ref{homeomorphism between states}, $\psi$ is in $S^1(A\rtimes\Gamma)$.

Observe that $B(L^2(A,\fai))_\Gamma\cap \pi_\fai(A)'=\pi_\psi(A\rtimes\Gamma)'$.

Again by Theorem~\ref{homeomorphism between states}, the state $\fai$ on $A$ is an ergodic $\Gamma$-invariant state  iff $\psi$ is a pure state on $A\rtimes\Gamma$ iff $\pi_\psi(A\rtimes\Gamma)'=\mathbb{C}$. The ``only if'' part follows immediately.

Now suppose $\fai(T^*T)=|\fai(T)|^2$ for every $T\in B(L^2(A,\fai))_\Gamma\cap \pi_\fai(A)'=\pi_\psi(A\rtimes\Gamma)'$. A straightforward calculation shows that
$T(\hat{1})=\fai(T)\hat{1}$. Then for every $a\in A$, we have
\begin{align*}
&T(\hat{a})=T\pi_\fai(a)(\hat{1})=\pi_\fai(a)T(\hat{1})  \\
&=\pi_\fai(a)(\fai(T)\hat{1})=\fai(T)\hat{a}.
\end{align*}
This means $T=\fai(T)$, a scalar multiple of the identity operator. Hence $\pi_\psi(A\rtimes\Gamma)'=\mathbb{C}$ and $\psi$ is a pure state.
\end{proof}

\begin{remark}
The key observation $B(L^2(A,\fai))_\Gamma\cap \pi_\fai(A)'=\pi_\psi(A\rtimes\Gamma)'$ in the proof is pointed out to us by Sven Raum.
\end{remark}

\subsection{Ergodic $\Gamma$-invariant states on $A$ and irreducible representations of $A\rtimes\Gamma$.}\

We say a representation $\pi_1: B\to B(H_1)$ of a  $C^*$-algebra $B$ is {\bf unitarily equivalent} to a representation $\pi_2: B\to B(H_2)$ if there exists a surjective isometry $U:H_1\to H_2$ such that
$U\pi_1(b)(x)=\pi_2(b)U(x)$ for all $b\in B$ and $x\in H_1$.

\begin{theorem}~\label{ErgodicIrr}
A representation  $\pi: A\rtimes\Gamma\to B(H)$ is unitarily equivalent to $\rho_\fai: A\rtimes\Gamma\to B(L^2(A,\fai))$ for some  $\fai\in E_\Gamma(A)$ iff $\pi$ is irreducible and  $H_\Gamma\neq 0$.
\end{theorem}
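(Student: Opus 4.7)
The plan is to leverage Theorem~\ref{homeomorphism between states} together with standard GNS theory, identifying $\rho_\fai$ as the GNS representation of the state $\tilde{\fai}:=R^{-1}(\fai)\in S^1(A\rtimes\Gamma)$ with cyclic vector $\hat 1\in L^2(A,\fai)$. Once this identification is in place, irreducibility of $\rho_\fai$ translates to purity of $\tilde{\fai}$, which is exactly what the homeomorphism $E_\Gamma(A)\cong P^1(A\rtimes\Gamma)$ gives; and the condition $H_\Gamma\neq 0$ becomes the condition $\tilde{\fai}(u_s)=1$.

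For the forward direction, I would start from $\fai\in E_\Gamma(A)$ and let $\tilde{\fai}\in P^1(A\rtimes\Gamma)$ be its image under $R^{-1}$. A short computation using $\rho_\fai(au_s)\hat 1=\pi_\fai(a)\widehat{\alpha_s(1)}=\hat a$ shows that $\hat 1$ is cyclic for $\rho_\fai$ and that $\tilde{\fai}(b)=\langle\rho_\fai(b)\hat 1,\hat 1\rangle$, so $(\rho_\fai,L^2(A,\fai),\hat 1)$ is the GNS triple of $\tilde{\fai}$; hence $\rho_\fai$ is irreducible because $\tilde{\fai}$ is pure. Moreover $\ub_\fai(s)\hat 1=\widehat{\alpha_s(1)}=\hat 1$, so $\hat 1\in L^2(A,\fai)_\Gamma$ and this space is nonzero.

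For the reverse direction, given an irreducible $\pi:A\rtimes\Gamma\to B(H)$ with a unit vector $x\in H_\Gamma$, I would form the vector state $\psi(b):=\langle\pi(b)x,x\rangle$. Since $\pi(u_s)x=x$, one has $\psi\in S^1(A\rtimes\Gamma)$. By irreducibility $x$ is cyclic, so $(\pi,H,x)$ is (unitarily equivalent to) the GNS triple of $\psi$; irreducibility then forces $\psi\in P^1(A\rtimes\Gamma)$, and Theorem~\ref{homeomorphism between states} produces $\fai:=\psi|_A\in E_\Gamma(A)$ with $\tilde{\fai}=\psi$. Composing the unitary equivalence $\pi\cong\pi_\psi$ with the identification $\pi_\psi\cong\rho_\fai$ from the forward direction finishes the proof.

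The only delicate step is verifying that $(\rho_\fai,L^2(A,\fai),\hat 1)$ really is the GNS triple of $\tilde{\fai}$, i.e., that the map $\widehat{au_s}\mapsto \hat a$ from the GNS space of $\tilde{\fai}$ onto $L^2(A,\fai)$ is a well-defined unitary intertwiner. This reduces, via Lemma~\ref{ResInv}, to the identity $\tilde{\fai}((au_s)^*(bu_t))=\fai(a^*b)$, which is immediate from $\tilde{\fai}(u_s^*\,c\,u_t)=\tilde{\fai}(c)$; the intertwining property with a generator $cu_r$ follows from the covariance relation already established in Lemma~\ref{covariant}. No further obstacles are anticipated.
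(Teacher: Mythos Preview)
Your proposal is correct and follows essentially the same route as the paper: both directions hinge on identifying $\rho_\fai$ with the GNS representation of the state $\tilde{\fai}=R^{-1}(\fai)\in S^1(A\rtimes\Gamma)$ (the paper does this by explicitly constructing the unitary $\Phi((\sum_s f_s u_s)^\wedge)=(\sum_s f_s)^\wedge$, which is exactly your map $\widehat{au_s}\mapsto\hat a$), and the reverse direction in both arguments proceeds by forming the vector state at a unit $x\in H_\Gamma$ and invoking GNS uniqueness. The only stylistic difference is that you appeal to the abstract uniqueness of GNS triples rather than writing out the intertwiner and its isometry check, but the substance is identical.
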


\begin{proof}

For a $\fai\in E_\Gamma(A)$, by Theorem~\ref{homeomorphism between states}, there exists a $\psi\in P^1(A\rtimes\Gamma)$ such that $R(\psi)=\fai$.

Next we show that $\rho_\fai$ is unitarily equivalent to the GNS representation $\pi_\psi:A\rtimes\Gamma\to B(L^2(A\rtimes\Gamma,\psi))$ of $A\rtimes\Gamma$ with respect to $\psi$. Since  $\psi$ is a pure state, this shows $\rho_\fai$ is irreducible.
\begin{claim*}
$\rho_\fai$ is unitarily equivalent to $\pi_\psi$.
\end{claim*}
\begin{proof}
Define $\Phi:L^2(A\rtimes\Gamma,\psi)\to L^2(A,\fai)$ by
$$\Phi((\sum_{s\in\Gamma}f_su_s)^\wedge)=\sum_{s\in\Gamma}f_s+I_\fai.$$ Here $I_\psi=\{b\in A\rtimes\Gamma|\psi(b^*b)=0\}$ and $I_\fai=\{a\in A|\fai(a^*a)=0\}$.

 Now we check $\Phi$ is a Hilbert space isomorphism.

By Lemma~\ref{ResInv} we have
\begin{align*}
&\langle(\sum_{s\in\Gamma}f_su_s)^\wedge,(\sum_{t\in\Gamma}g_tu_t)^\wedge\rangle=\psi((\sum_{t\in\Gamma}g_tu_t)^*(\sum_{s\in\Gamma}f_su_s))   \\
&=\psi(\sum_{s,t\in\Gamma}u_{t^{-1}}g_t^*f_su_s)=\psi(\sum_{s,t\in\Gamma}g_t^*f_s)    \\
&=\fai((\sum_{t\in\Gamma}g_t)^*(\sum_{s\in\Gamma}f_s))=\langle(\sum_{s\in\Gamma}f_s)^\wedge,(\sum_{t\in\Gamma}g_t)^\wedge\rangle.
\end{align*}
Thus $\Phi$ is an isometry. The image of $\Phi$ is dense in $L^2(A,\fai)$, so $\Phi$ is also surjective. These prove that $\Phi$ is an isomorphism between Hilbert spaces.

Next we verify unitary equivalence between $\pi_\psi$ and $\rho_\fai$.
\begin{align*}
&\Phi\pi_\psi(\sum_{s\in\Gamma}f_su_s)((\sum_{t\in\Gamma}g_tu_t)^\wedge)=\Phi((\sum_{s,t\in\Gamma}f_su_sg_tu_t)^\wedge) \\
&=\Phi((\sum_{s,t\in\Gamma}f_su_sg_t u_{s^{-1}}u_{st})^\wedge)=(\sum_{s,t\in\Gamma}f_s\alpha_s(g_t))^\wedge.
\end{align*}
On the other hand,
\begin{align*}
&\rho_\fai((\sum_{s\in\Gamma}f_su_s)\Phi(\sum_{t\in\Gamma}g_tu_t)^\wedge)=\rho_\fai(\sum_{s\in\Gamma}f_su_s)((\sum_{t\in\Gamma}g_t)^\wedge) \\
&=(\sum_{s,t\in\Gamma}f_s\alpha_s(g_t))^\wedge.
\end{align*}
So $\pi_\psi(b)(x)=\Phi^{-1}\rho_\fai(b)\Phi(x)$ for all $b\in A\rtimes\Gamma$ and $x\in L^2(A\rtimes\Gamma,\psi)$. Hence $\pi_\psi$ and $\rho_\fai$ are unitarily equivalent.
\end{proof}
Note that $0\neq \hat{1}\in L^2(A,\fai)$ and $\rho_\fai(u_s)(\hat{1})=\hat{1}$ for all $s\in\Gamma$. Hence $L^2(A,\fai)_\Gamma\neq 0$.

Conversely given an  irreducible representation $\pi: A\rtimes\Gamma\to B(H)$  with $H_\Gamma\neq 0$, take a unit vector $x\in H_\Gamma$ and define a state $\psi$ on $A\rtimes\Gamma$ by
$$\psi(b)=\langle \pi(b)x,x\rangle $$ for all $b\in A\rtimes\Gamma$. Since $\pi$ is irreducible, the state $\psi$ is a pure state and the GNS representation of $A\rtimes\Gamma$ with respect to $\psi$, $\pi_\psi$  is unitarily equivalent to $\pi$~\cite[Theorem I.9.8]{Davidson1996}~\cite[2.4.6]{Dixmier1977}. Also $x\in H_\Gamma$ implies that $\psi(u_s)=1$  for all $s\in\Gamma$. So $\fai=\psi|_A\in E_\Gamma(A)$. By the previous claim  $\pi_\psi$ is unitarily equivalent to $\rho_\fai$. This finishes the proof.
\end{proof}

Now we consider the case when $A$ is commutative.
\begin{theorem}~\label{dim}
For any irreducible representation $\pi: C(X)\rtimes\Gamma\to B(H)$, we have $\dim{H_\Gamma}\leq1$. If $H_\Gamma\neq 0$, then there exists a unique ergodic $\Gamma$-invariant state $\fai$ on $C(X)$~(or a unique regular Borel probability measure on $X$) such that $\pi$ is unitarily equivalent to $\rho_\fai$.
\end{theorem}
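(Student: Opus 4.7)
My plan is to combine Theorem~\ref{ErgodicIrr} with a measure-theoretic ergodicity argument available in the commutative setting. The existence assertion is immediate from Theorem~\ref{ErgodicIrr}: if $H_\Gamma\neq 0$, then $\pi$ is unitarily equivalent to $\rho_\fai$ for some $\fai\in E_\Gamma(C(X))$, and by the Riesz representation theorem $\fai$ corresponds to a regular Borel probability measure $\mu$ on $X$. The substance of the theorem lies in establishing $\dim H_\Gamma\leq 1$ and the uniqueness of $\fai$ (equivalently, of $\mu$).

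First I would identify $L^2(C(X),\fai)$ with $L^2(X,\mu)$, so that $\pi_\fai$ becomes the multiplication representation and $\ub_\fai(s)$ the Koopman unitary induced by the $\Gamma$-action. Since $L^\infty(X,\mu)$ is a MASA in $B(L^2(X,\mu))$ and $C(X)$ is strongly dense in it, one has $\pi_\fai(C(X))'=L^\infty(X,\mu)$. Applying Proposition~\ref{ergodic via projections}, every $T\in B(L^2(X,\mu))_\Gamma\cap L^\infty(X,\mu)$ is a scalar multiple of the identity; read inside $L^\infty$, this says every $\Gamma$-invariant element of $L^\infty(X,\mu)$ is constant almost everywhere, i.e.\ $\mu$ is ergodic in the classical measure-theoretic sense.

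Next I would promote this to $L^2$-invariant functions by a standard level-set argument: for real-valued $h\in L^2(X,\mu)_\Gamma$ (splitting into real and imaginary parts), each indicator $\mathbf{1}_{\{h>c\}}$ lies in $L^\infty(X,\mu)_\Gamma$, so $\mu(\{h>c\})\in\{0,1\}$ for every $c\in\mathbb{R}$, forcing $h$ to be constant almost everywhere. Hence $L^2(X,\mu)_\Gamma=\mathbb{C}\hat{1}$, and transporting through the unitary equivalence yields $\dim H_\Gamma=1$.

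Finally, for uniqueness: if $\rho_{\fai_1}\sim\pi\sim\rho_{\fai_2}$ via a unitary $U\colon L^2(X,\mu_1)\to L^2(X,\mu_2)$ intertwining the $(C(X)\rtimes\Gamma)$-actions, then $U$ maps the one-dimensional invariant subspaces to each other, so $U\hat{1}_{\mu_1}=c\,\hat{1}_{\mu_2}$ for some scalar $c$ of modulus one. Evaluating $\langle \rho_{\fai_i}(f)\hat{1}_{\mu_i},\hat{1}_{\mu_i}\rangle$ on both sides gives $\int f\,d\mu_1=\int f\,d\mu_2$ for every $f\in C(X)$, and Riesz yields $\mu_1=\mu_2$. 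The main obstacle---and the only step that genuinely exploits commutativity---is deducing measure-theoretic ergodicity from state-ergodicity, which depends crucially on the MASA property of $L^\infty(X,\mu)$ inside $B(L^2(X,\mu))$.
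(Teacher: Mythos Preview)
Your proposal is correct and follows the same overall arc as the paper: reduce to $\rho_\fai$ via Theorem~\ref{ErgodicIrr}, show the $\Gamma$-invariant vectors in $L^2(X,\mu)$ are the constants, and deduce uniqueness by tracking $\hat{1}$ through the intertwiner. The paper's proof is shorter only because it outsources the key step---that $L^2(X,\mu)_\Gamma$ consists of constants when $\mu$ is ergodic---to a citation (Glasner, \cite[Chapter 3, 3.10]{Glasner2003}), whereas you derive it internally from Proposition~\ref{ergodic via projections} together with the MASA property of $L^\infty(X,\mu)$ and a level-set promotion from $L^\infty$ to $L^2$. Your route has the virtue of making explicit the passage from state-ergodicity (extreme point of $S_\Gamma(C(X))$) to measure-theoretic ergodicity, which the paper leaves implicit in the citation. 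One small caution: the literal statement of Proposition~\ref{ergodic via projections} gives only $\fai(T^*T)=|\fai(T)|^2$, not that $T$ is scalar; you should either remark that for a multiplication operator $M_g$ this reads $\int|g|^2\,d\mu=|\int g\,d\mu|^2$, which forces $g$ constant a.e.\ by Cauchy--Schwarz, or else invoke the identification $B(L^2(X,\mu))_\Gamma\cap\pi_\fai(C(X))'=\rho_\fai(C(X)\rtimes\Gamma)'$ from the proof of that proposition together with the irreducibility of $\rho_\fai$. Your uniqueness argument is essentially the same as the paper's.
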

\begin{proof}
Suppose $H_\Gamma\neq 0$ for an irreducible representation $\pi: C(X)\rtimes\Gamma\to B(H)$.

Take unit vectors $x,y\in H_\Gamma$. Define a state $\psi$ on $A\rtimes\Gamma$ by
$$\psi(b)=\langle \pi(b)x,x\rangle $$ for all $b\in C(X)\rtimes\Gamma$. Then $\fai=\psi|_{C(X)}$ gives an ergodic $\Gamma$-invariant probability measure $\mu$ on $X$ with $\fai(f)=\int_X f\,d\mu$ for all $f\in C(X)$. Also the GNS representation $\pi_\psi$ of $C(X)\rtimes\Gamma$ with respect to $\psi$,  is unitarily equivalent to $\rho_\fai: C(X)\rtimes\Gamma\to B(L^2(A,\fai))$. Note that $L^2(A,\fai)=L^2(X,\mu)$ and $L^2(A,\fai)_\Gamma$ consists of $\Gamma$-invariant functions in $L^2(X,\mu)$, which are always constant functions~\cite[Chapter3, 3.10.]{Glasner2003}. Under  surjective isometries  $H\cong L^2(A\rtimes\Gamma,\psi)\cong L^2(X,\mu)$, both $x$ and $y$ are mapped to $\Gamma$-invariant functions in $L^2(X,\mu)$. Since $\mu$ is ergodic, their images in $L^2(X,\mu)$ are both constant functions. Hence there exists a constant $\lambda$ with absolute value 1 such that $x=\lambda y$. This shows that $\dim{H_\Gamma}=1$.

For the second part, the existence of $\fai$ follows from Theorem~\ref{ErgodicIrr}.

To prove the uniqueness of $\fai$, we show the following.
\begin{claim*}
If $\rho_\fai\sim\rho_\psi$ for $\fai,\psi\in S_\Gamma(C(X))$, then $\fai=\psi$.
\end{claim*}
\begin{proof}
Let $\Theta: L^2(A,\fai)\to L^2(A,\psi)$ be an isomorphism. It is easy to see that $\Theta$ preserves $\Gamma$-invariant vectors, i.e., $\Theta: L^2(A,\fai)_\Gamma\to L^2(A,\psi)_\Gamma$ is also an isomorphism. Hence $\Theta(\hat{1})=\lambda\hat{1}$ for some complex number $\lambda$ with $|\lambda|=1$.

By definition of $\rho_\fai$, we have $\fai(f)=\langle \rho_\fai(f)\hat{1},\hat{1}\rangle$ for all $f\in C(X)$. It follows that
$$\psi(f)=\langle \rho_\psi(f)\hat{1},\hat{1}\rangle=\langle \Theta^{-1}\rho_\fai(f)\Theta\hat{1},\hat{1}\rangle=\langle \rho_\fai(f)\lambda\hat{1},\lambda\hat{1}\rangle=\fai(f)$$ for all $f\in C(X)$.
\end{proof}

Hence $\fai$ is uniquely determined by the unitary equivalence class of $\pi$.
\end{proof}

\begin{remark}
\begin{enumerate}
\item Theorem~\ref{ErgodicIrr} and Theorem~\ref{dim} say that classification of ergodic $\Gamma$-invariant regular Borel probability measures on a compact Hausdorff space $X$ amounts to classification of equivalence classes of irreducible representations of $C(X)\rtimes\Gamma$ whose restriction to $\Gamma$ contains trivial representation.
\item When $A$ is non-commutative, Theorem~\ref{dim} fails. For instance, one can take a noncommutative $C^*$-algebra $A$ and a discrete group $\Gamma$ acting on $A$ trivially. An irreducible representation
$\pi: A\to B(H)$ with $\dim{H}>1$ and the trivial representation $\Gamma\to B(H)$ give rise to an irreducible representation of $\rho:A\rtimes\Gamma\to B(H)$. But $H=H_\Gamma$ is not of dimension 1. 
\end{enumerate}
\end{remark}

There is an immediate application of Theorem~\ref{dim} to representation theory of semidirect product groups.
\begin{corollary}~\label{rep}
Suppose a discrete group $\Gamma$ acts on a discrete abelian group $G$ by group automorphisms. Every irreducible unitary representation $\pi:G\rtimes \Gamma\to B(H)$ of the semidirect product group $G\rtimes \Gamma$ satisfies that $\dim{H_\Gamma}\leq 1$. When $\dim{H_\Gamma}=1$, the representation $\pi$ is induced by an ergodic $\Gamma$-invariant regular Borel probability measure $\mu$ on the Pontryagin dual $\widehat{G}$ of $G$.
\end{corollary}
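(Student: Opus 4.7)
The plan is to reduce Corollary \ref{rep} directly to Theorem \ref{dim} via Pontryagin duality. First I would use the universal property of the full group $C^*$-algebra: unitary representations of the discrete group $G \rtimes \Gamma$ on $H$ are in bijection with nondegenerate $*$-representations of $C^*(G \rtimes \Gamma)$ on $H$, and under this bijection the notion of irreducibility is preserved. Moreover, the subspace $H_\Gamma$, defined as vectors fixed by the subgroup $\Gamma \subseteq G \rtimes \Gamma$, coincides on both sides.

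Next I would identify the relevant crossed product structure. Because the action of $\Gamma$ on $G$ by group automorphisms extends canonically to an action on $C^*(G)$, there is a natural $*$-isomorphism
\[
C^*(G \rtimes \Gamma) \;\cong\; C^*(G) \rtimes \Gamma,
\]
which sends the canonical generators of $G \rtimes \Gamma$ to the corresponding generators of the crossed product (the unitaries $u_s$ corresponding to $s \in \Gamma$ match up). Since $G$ is discrete abelian, Pontryagin duality provides a $*$-isomorphism $C^*(G) \cong C(\widehat{G})$, and the induced $\Gamma$-action on $C(\widehat{G})$ is exactly the one coming from the dual action of $\Gamma$ on the compact Hausdorff space $\widehat{G}$. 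Combining, we obtain
\[
C^*(G \rtimes \Gamma) \;\cong\; C(\widehat{G}) \rtimes \Gamma,
\]
and this isomorphism identifies the subgroup $\Gamma$ inside $G \rtimes \Gamma$ with the copy of $\Gamma$ sitting inside $C(\widehat{G}) \rtimes \Gamma$ via the $u_s$'s.

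With this identification in place, any irreducible unitary representation $\pi: G \rtimes \Gamma \to B(H)$ corresponds to an irreducible representation $\widetilde{\pi}: C(\widehat{G}) \rtimes \Gamma \to B(H)$ with the same space of $\Gamma$-fixed vectors $H_\Gamma$. Applying Theorem \ref{dim} with $X = \widehat{G}$ immediately yields $\dim H_\Gamma \leq 1$, and in case $\dim H_\Gamma = 1$, the existence of a unique ergodic $\Gamma$-invariant regular Borel probability measure $\mu$ on $\widehat{G}$ such that $\widetilde{\pi}$ is unitarily equivalent to $\rho_\fai$ with $\fai(f) = \int_{\widehat{G}} f \, d\mu$.

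The argument is essentially bookkeeping once the two $C^*$-algebraic identifications are set up correctly. The only mildly delicate point, which I would spell out, is the compatibility of the isomorphism $C^*(G \rtimes \Gamma) \cong C(\widehat{G}) \rtimes \Gamma$ with the embedding of $\Gamma$, so that ``restriction to $\Gamma$ contains the trivial representation'' has the same meaning on both sides; this however follows from tracking the generators through the standard construction.
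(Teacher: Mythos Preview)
Your proposal is correct and follows essentially the same approach as the paper: identify $C^*(G\rtimes\Gamma)\cong C^*(G)\rtimes\Gamma\cong C(\widehat{G})\rtimes\Gamma$ via Pontryagin duality, use the correspondence between irreducible unitary representations of $G\rtimes\Gamma$ and irreducible representations of $C^*(G\rtimes\Gamma)$, and then apply Theorem~\ref{dim}. The paper's proof is terser, but your additional remark about tracking the embedding of $\Gamma$ through the isomorphisms is a reasonable clarification of a point the paper leaves implicit.
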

\begin{proof}
Note that $\Gamma$ acts on group $C^*$-algebra $C^*(G)$ as automorphisms, $C^*(G)=C(\hat{G})$ for the dual group $\hat{G}$ of $G$ and $C^*(G)\rtimes\Gamma\cong C^*(G\rtimes\Gamma)$. There exists a 1-1 correspondence between  irreducible unitary representations of $G\rtimes\Gamma$ and irreducible representations of $C^*(G\rtimes\Gamma)$. Apply Theorem~\ref{dim} to the case $C(X)=C(\hat{G})$.
\end{proof}

\section{Furstenberg's $\times p,\times q$ problem via representation theory}\

Let  $S,T:X\to X$ be two commuting continuous maps on a compact Hausdorff space $X$. A Borel probability measure $\mu$ on $X$ is called {\bf $S,T$-invariant} if $\mu(S^{-1}A)=\mu(T^{-1}A)=\mu(A)$ for every Borel subset $A$ of $X$. An $S,T$-invariant measure $\mu$ is called {\bf ergodic} if every Borel set $E$ with $S^{-1}E=E=T^{-1}E$ satisfies that $\mu(E)=0$ or 1.

Define  maps $T_p, T_q:\mathbb{T}\to \mathbb{T}$ as $T_p(z)=z^p$ and $T_q(z)=z^q$ for all $z\in\mathbb{T}$.

A Borel probability  measure $\mu$ on $\mathbb{T}$ is called {\bf $\times p,\times q$-invariant} if it is $T_p,T_q$-invariant. A Borel set $E\subset\mathbb{T}$ is called $\times p,\times q$-invariant if $A=T_p A=T_q A$.

We can define $\times p,\times q$ maps $T_p, T_q$ on $\mathbb{Z}[\frac{1}{pq}]$ by $T_p(g)=pg, T_q(g)=qg$  for every $g\in \mathbb{Z}[\frac{1}{pq}]$. Note that $T_p$ and $T_q$ are group automorphisms. Hence they induces group automorphisms on the dual group $S_{pq}$ of $\mathbb{Z}[\frac{1}{pq}]$. For convenience we also call them $\times p,\times q$ maps on $S_{pq}$.

Denote the set of $\times p,\,\times q$-invariant measures on  unit circle by $M_{p,q}(\mathbb{T})$, the set of ergodic $\times p,\,\times q$-invariant measures on unit circle by $EM_{p,q}(\mathbb{T})$, the set of $\times p,\,\times q$-invariant measures on  $S_{pq}$ by $M_{p,q}(S_{pq})$, the set of ergodic $\times p,\,\times q$-invariant measures on  $S_{pq}$  by $EM_{p,q}(S_{pq})$.

\subsection{$\times p,\times q$-invariant measures on $pq$-solenoid and $\times p,\times q$-invariant measures on the unit circle}\

The following result is well-known for experts.  For completeness we give a proof here.

\begin{proposition}~\label{invariant measures on solenoid}
When equipped with weak-$*$ topologies, the restriction map $R: M_{p,q}(S_{pq})\to M_{p,q}(\mathbb{T})$ defined by $R(\mu)(f)=\mu(f)$ for $\mu\in M_{p,q}(S_{pq})$ and $f\in C(\mathbb{T})$ is a homeomorphism. Also $R$ restricts a homeomorphism from $EM_{p,q}(S_{pq})$ to $EM_{p,q}(\mathbb{T})$.
\end{proposition}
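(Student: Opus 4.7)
The restriction map $R$ is naturally identified with the pushforward along the continuous surjection $\pi:S_{pq}\to\T$ dual to the inclusion $\Z\hookrightarrow\Z[\frac{1}{pq}]$. My plan is to prove bijectivity of $R$ via Fourier analysis on the compact abelian groups $\T$ and $S_{pq}$, whose Pontryagin duals are $\Z$ and $\Z[\frac{1}{pq}]$ respectively. Continuity of $R$ with respect to weak-$*$ topology is immediate from the definition, and $R$ is evidently affine. Both $M_{p,q}(\T)$ and $M_{p,q}(S_{pq})$ are compact Hausdorff, so once bijectivity is established, $R$ will automatically be a homeomorphism; moreover an affine continuous bijection carries extreme points to extreme points, yielding the ergodic part at no extra cost.

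For injectivity, I would work with the Fourier coefficients $\hat\mu(g)=\int\chi_g\,d\mu$ for $g\in\Z[\frac{1}{pq}]$, where $\chi_g$ is the character of $S_{pq}$ corresponding to $g$. The $\times p$- and $\times q$-invariance of $\mu$ translate to $\hat\mu(pg)=\hat\mu(qg)=\hat\mu(g)$; hence, writing any $g=k/(pq)^n$ with $k\in\Z$, one has $\hat\mu(k/(pq)^n)=\hat\mu(k)=\widehat{R(\mu)}(k)$, which is determined by $R(\mu)$. Thus $R(\mu_1)=R(\mu_2)$ forces $\hat\mu_1=\hat\mu_2$ and so $\mu_1=\mu_2$. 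For surjectivity I reverse the recipe: given $\nu\in M_{p,q}(\T)$, define $\varphi:\Z[\frac{1}{pq}]\to\C$ by $\varphi(k/(pq)^n):=\hat\nu(k)$, which is well-defined thanks to $\times(pq)$-invariance of $\nu$. The crucial step is to verify $\varphi$ is positive definite on the discrete group $\Z[\frac{1}{pq}]$: for any finite set $F\subset\Z[\frac{1}{pq}]$, choose a common denominator $(pq)^n$, identifying $F$ with a finite subset $F'\subset\Z$ via $k/(pq)^n\mapsto k$; this identification turns the positive-definiteness sum for $\varphi$ on $F$ into the one for $\hat\nu$ on $F'$, which is non-negative since $\hat\nu$ itself is positive definite. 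Bochner's theorem for the discrete abelian group $\Z[\frac{1}{pq}]$ then delivers a unique Borel probability measure $\mu$ on $S_{pq}$ with $\hat\mu=\varphi$. The defining formula gives $\hat\mu(pg)=\hat\mu(g)=\hat\mu(qg)$, so $\mu\in M_{p,q}(S_{pq})$; restricting $\hat\mu$ to $\Z$ recovers $\hat\nu$, proving $R(\mu)=\nu$.

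The main obstacle, while not deep, lies in the bookkeeping around positive-definiteness in the surjectivity step --- one must translate positive-definiteness data carefully between $\Z$ and $\Z[\frac{1}{pq}]$ and invoke Bochner's theorem in the correct form for discrete groups. A variant approach that avoids Bochner is the projective-limit construction: since $S_{pq}=\varprojlim_n(\T,T_{pq})$, the $T_{pq}$-invariance of $\nu$ supplies compatible marginal measures $\nu_n=\nu$ on every level, which glue by the inverse-limit theorem for probability measures on compact Hausdorff systems to a measure $\mu$ on $S_{pq}$ with each level projection equal to $\nu$. Verifying $T_p, T_q$-invariance of $\mu$ then reduces to observing $\pi_n\circ T_p=T_p\circ\pi_n$ at every level $n$, which is straightforward. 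Either route leads to the same conclusion, and the ergodic refinement follows by invoking that extreme points correspond under the affine homeomorphism $R$.
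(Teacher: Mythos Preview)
Your proposal is correct and follows essentially the same approach as the paper: both arguments establish injectivity and surjectivity via Fourier coefficients on $\Z[\frac{1}{pq}]$, clear denominators to reduce positive-definiteness on a finite subset of $\Z[\frac{1}{pq}]$ to positive-definiteness on $\Z$, invoke Bochner's theorem, and then conclude via the compact-Hausdorff and affine arguments. The only differences are cosmetic (you write elements as $k/(pq)^n$ while the paper writes $kp^mq^n$), and your inverse-limit alternative is an extra route the paper does not pursue.
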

\begin{proof}
Take $\mu\in  M_{p,q}(S_{pq})$. Since $C(\mathbb{T})$ is a $C^*$-subalgebra of $C(S_{pq})$, the restriction $R(\mu)$ of $\mu$ on $C(\mathbb{T})$ belongs to  $M_{p,q}(\mathbb{T})$ and $R$ is also continuous under the weak-$*$ topology.

Conversely, assume that $\mu\in M_{p,q}(\mathbb{T})$. Note that the group algebra $\mathbb{C}\mathbb{Z}([\dfrac{1}{pq}])$ is a dense $*$-subalgebra of $C(S_{pq})$ and define $\nu(z^{kp^mq^n})=\mu(z^k)$ for  $n,m,k\in\mathbb{Z}$. By Bochner's Theorem~\cite[1.4.3]{Rudin1990} $\nu$ is a Borel probability measure on $S_{pq}$ iff $\{\nu(z^k)\}_{k\in\mathbb{Z}[\frac{1}{pq}]}$ is a positive definite sequence.

For any finite subset $F$ of $\mathbb{Z}[\frac{1}{pq}]$, there exist positive integers $k,l$ such that $F'=p^kq^lF=\{p^kq^ls|s\in F\}$ is a finite subset of $\mathbb{Z}$. Then we have
\begin{align*}
&\nu((\sum_{s\in F}\lambda_s z^s)^*(\sum_{t\in F}\lambda_t z^t))=\sum_{s,t\in F}\bar{\lambda}_s\lambda_t \nu(z^{t-s})  \\
&=\sum_{s,t\in F}\bar{\lambda}_s\lambda_t \mu(z^{p^kq^l(t-s)})=\mu((\sum_{s\in F'}\lambda_{sp^{-k}q^{-l}} z^s)^*(\sum_{t\in F'}\lambda_{tp^{-k}q^{-l}} z^t))\geqslant 0.        \\
\end{align*}
Furthermore the $\times p,\,\times q$-invariance of $\nu$ follows from the definition. This shows that $\nu$ is in $M_{p,q}(S_{pq})$. Moreover $\mu(z^k)=\nu(z^k)$ for all $k\in\mathbb{Z}$ by the $\times p,\times q$-invariance of $\mu$, hence $\mu$ is the restriction of $\nu$ on $C(\mathbb{T})$, and this proves the subjectivity of $R$.

On the other hand, if $R(\mu_1)=R(\mu_2)$ for $\mu_1,\mu_2\in M_{p,q}(S_{pq})$, then $\mu_1(z^k)=\mu_2(z^k)$ for all $k\in\mathbb{Z}$. Since $\mu_1$  and $\mu_2$ are $\times p,\times q$-invariant, we have $\mu_1(z^{kp^mq^n})=\mu_2(z^{kp^mq^n})$ for all $n,m,k\in\mathbb{Z}$. This proves the injectivity of $R$.

So $R$ is a bijective continuous map between two compact Hausdorff spaces $M_{p,q}(S_{pq})$ and $M_{p,q}(\mathbb{T})$, this implies that $R$ is a homeomorphism.

Furthermore $R$ is a homeomorphism from $EM_{p,q}(S_{pq})$ to $EM_{p,q}(\mathbb{T})$ since $R$ is  affine.
\end{proof}

\begin{theorem}~\label{finiterepresentation}
A representation $\pi:C(S_{pq})\rtimes \mathbb{Z}^2\to B(H)$ is induced by a finitely supported ergodic $\times p,\times q$-invariant measure $\mu$ on $\mathbb{T}$ if and only if
\begin{enumerate}
\item $\pi$ is irreducible;
\item $H_{\mathbb{Z}^2}\neq 0$;
\item There exists nonzero $N\in\mathbb{Z}\subset\mathbb{Z}[\frac{1}{pq}]$ such that $\pi(z^N)x=x$ for every $x\in H_{\mathbb{Z}^2}$.
\end{enumerate}
\end{theorem}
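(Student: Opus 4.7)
The plan is to combine Theorem~\ref{dim} (applied with $X=S_{pq}$ and $\Gamma=\mathbb{Z}^2$) with Proposition~\ref{invariant measures on solenoid} to translate conditions (1) and (2) into a statement about an ergodic $\times p,\times q$-invariant measure on $\mathbb{T}$, and then to show that condition (3) encodes exactly finiteness of the support of that measure.

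For the ``if'' direction, assume (1), (2), (3). By Theorem~\ref{dim} there is a unique ergodic $\mathbb{Z}^2$-invariant regular Borel probability measure $\mu$ on $S_{pq}$ such that $\pi$ is unitarily equivalent to $\rho_\varphi$, where $\varphi(f)=\int_{S_{pq}} f\,d\mu$, and under this equivalence $H_{\mathbb{Z}^2}$ is the one-dimensional subspace $\mathbb{C}\hat{1}\subset L^2(S_{pq},\mu)$. Condition (3) then becomes $\rho_\varphi(z^N)\hat{1}=\hat{1}$; since $z^N\in C(S_{pq})$, this is equivalent to $\int_{S_{pq}}|z^N-1|^2\,d\mu=0$, i.e., $z^N=1$ $\mu$-a.e.\ on $S_{pq}$. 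Under the surjection $p\colon S_{pq}\to\mathbb{T}$ Pontryagin-dual to $\mathbb{Z}\hookrightarrow\mathbb{Z}[\frac{1}{pq}]$, the function $z^N\in C(S_{pq})$ is the pull-back of $z^N\in C(\mathbb{T})$. Hence the corresponding ergodic $\times p,\times q$-invariant measure $\mu_0$ on $\mathbb{T}$, equal to $p_*\mu$ and matching $\mu$ under the homeomorphism of Proposition~\ref{invariant measures on solenoid}, is supported on the finite set of $N$-th roots of unity. This exhibits $\pi$ as induced by the finitely supported measure $\mu_0$.

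For the ``only if'' direction, suppose $\pi$ is induced by a finitely supported ergodic $\times p,\times q$-invariant measure $\mu_0$ on $\mathbb{T}$; let $\mu$ be its lift to $S_{pq}$ via Proposition~\ref{invariant measures on solenoid} and $\varphi$ the corresponding state. Then (1) and (2) follow from Theorem~\ref{ErgodicIrr}. For (3), observe that $T_p(F)\subseteq F$ for the finite support $F$ of $\mu_0$: for $w\in F$ one has $\mu_0(\{T_p(w)\})=\mu_0(T_p^{-1}\{T_p(w)\})\geq\mu_0(\{w\})>0$. Hence each $w\in F$ has a finite $T_p$-orbit on $\mathbb{T}$ and, having unit modulus, is a root of unity. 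Taking $N$ to be the least common multiple of the orders of the points of $F$ yields $z^N=1$ $\mu_0$-a.e.\ on $\mathbb{T}$, hence $z^N=1$ $\mu$-a.e.\ on $S_{pq}$, which by reversing the calculation above gives $\pi(z^N)x=x$ for every $x\in H_{\mathbb{Z}^2}=\mathbb{C}\hat{1}$.

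I do not anticipate serious obstacles: the argument consists almost entirely of applying Theorem~\ref{dim} and Proposition~\ref{invariant measures on solenoid} as dictionaries. The only genuinely non-formal input is the elementary fact that a finite $T_p$-invariant subset of $\mathbb{T}$ consists of roots of unity. The subtlest bookkeeping is keeping straight the identification of the generator $z\in C(S_{pq})=C^*(\mathbb{Z}[\frac{1}{pq}])$, i.e.\ the character $\chi\mapsto\chi(1)$, with the pull-back by $p$ of the coordinate function $z$ on $\mathbb{T}$.
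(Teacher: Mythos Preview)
Your proposal is essentially correct and, for the harder direction, more streamlined than the paper's argument. The paper's proof of the ``if'' direction goes through an intermediate \emph{Claim} that $H$ is finite dimensional, established by an arithmetic reduction: it replaces $N$ by an $M$ coprime to $pq$, then shows $\pi(\mathbb{Z}[\frac{1}{pq}])y=\pi(\mathbb{Z})y$ and finally $\dim H\le N$. Only then is the associated measure shown to have finite support. You bypass all of this: from $\rho_\varphi(z^N)\hat 1=\hat 1$ you read off $z^N=1$ $\mu$-a.e.\ on $S_{pq}$, push forward to $\mathbb{T}$, and conclude immediately that $\mu_0$ is supported on the $N$-th roots of unity. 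This is a genuine simplification. Conversely, for condition~(3) in the ``only if'' direction, the paper simply asserts that a finitely supported ergodic $\times p,\times q$-invariant measure lives on $\{i/N\}_{i=0}^{N-1}$ for some $N$, whereas your $T_p$-orbit argument actually supplies the reason the support consists of roots of unity.

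One bookkeeping point you should not skip: in this paper, ``$\pi$ is induced by $\mu$ on $\mathbb{T}$'' means the concrete representation $\pi_\mu$ on $L^2(\mathbb{T},\mu)$ from the Introduction (with $\pi_\mu(s)=V_p$, $\pi_\mu(t)=V_q$, $\pi_\mu(z)=M_z$), not the representation $\rho_\varphi$ on $L^2(S_{pq},\nu)$ that Theorems~\ref{ErgodicIrr} and~\ref{dim} speak about. The paper bridges this by observing that $L^2(\mathbb{T},\mu)$ sits inside $L^2(S_{pq},\nu)$ as a nonzero $\rho_\nu$-invariant subspace, hence equals it by irreducibility, so that $\pi_\mu\sim\rho_\nu$. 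Both directions of your argument invoke Theorem~\ref{ErgodicIrr}/\ref{dim} as if this identification were automatic; you should insert a sentence establishing $\pi_{\mu_0}\sim\rho_\varphi$ (the same invariant-subspace argument works) so that ``induced by $\mu_0$'' matches the paper's meaning.
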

\begin{proof}
Suppose that $\pi:C(S_{pq})\rtimes \mathbb{Z}^2\to B(H)$ is induced by a finitely supported ergodic $\times p,\times q$-invariant measure $\mu$ on $\mathbb{T}$.

Since both $\times p$ and $\times q$ maps have zero entropy with respect to $\mu$, there is a representation $\pi_\mu: C(S_{pq})\rtimes \mathbb{Z}^2\to B(L^2(\mathbb{T},\mu))$ induced by $\mu$~(see Introduction for the definition of $\pi_\mu$).

By Proposition~\ref{invariant measures on solenoid}, $\nu=R^{-1}(\mu)$ is an ergodic $\times p,\times q$-invariant measure on $S_{pq}$. Hence the representation $\rho_\nu$ of $C(S_{pq})\rtimes \mathbb{Z}^2$ on $L^2(S_{pq},\nu)$ is irreducible.

Note that $L^2(\mathbb{T},\mu)$ is a subspace of $L^2(S_{pq},\nu)$ since $\mu$ is the restriction of $\nu$ from  $C(S_{pq})$ onto $C(\mathbb{T})$. Also $L^2(\mathbb{T},\mu)$ is a nonzero invariant subspace of $L^2(S_{pq},\nu)$ under $\rho_\nu$. Hence $L^2(\mathbb{T},\mu)=L^2(S_{pq},\nu)$. Hence $\pi_\mu$ is unitarily equivalent to $\rho_\nu$. So $\pi$ is irreducible and $H_{\mathbb{Z}^2}\neq 0$.

Moreover $\mu$ is finitely supported in a subset of $\{\frac{i}{N}\}_{i=0}^{N-1}\subset[0,1)$~(here we identify $\mathbb{T}$ with $[0,1)$). Hence $\mu(z^N)=1$ which implies that  $\pi(z^N)x=x$ for every $x\in H_{\mathbb{Z}^2}$.

Conversely assume that $\pi$ is an irreducible representation satisfying that $H_{\mathbb{Z}^2}\neq 0$ and $\pi(z^N)x=x$ for a nonzero $N\in\mathbb{Z}$ and every $x\in H_{\mathbb{Z}^2}$.

\begin{claim*}
$H$ is finite dimensional.
\end{claim*}
\begin{proof}
Take a unit $y\in H_{\mathbb{Z}^2}$. Then $\overline{{\rm Span}\pi(\mathbb{Z}[\frac{1}{pq}])y}$ is an invariant subspace of $H$ under $\pi$. Since $\pi$ is irreducible, we have
$$H=\overline{{\rm Span}(\pi(\mathbb{Z}[\frac{1}{pq}])y)}.$$

So it suffices to prove ${\rm Span}(\pi(\mathbb{Z}[\frac{1}{pq}])y)$ is finite dimensional.

Firstly we prove that $\pi(z^M)y=y$ for a positive integer $M$ coprime to $pq$.

Without loss of generality we can assume $N>0$.  There exist nonnegative integers $i,j,K,M$ such that $KN=Mp^iq^j$ with $M$ coprime to $pq$. Then $\pi(z^{Mp^iq^j})y=\pi(z^{KN})y=y$.

Note that $\mathbb{Z}^2$ acts on $\mathbb{Z}[\frac{1}{pq}]$ by $\times p,\times q$, that is, $(m,n)\cdot z^k=z^{kp^mq^n}$ for all $m,n\in\mathbb{Z}$ and every $k\in \mathbb{Z}[\frac{1}{pq}]$. Since $y$ is in $H_{\mathbb{Z}^2}$,

we have $\pi((i,j))\pi(z^M)y=\pi(z^{Mp^iq^j})\pi((i,j))y=\pi(z^{Mp^iq^j})y=y$, which implies $\pi(z^M)y=y$.

Secondly we prove that $\pi(\mathbb{Z}[\frac{1}{pq}])y=\pi(\mathbb{Z})y$.

For all nonnegative integers $i,j$, there exists an integer $l$ such that $lp^iq^j=rM+1$ since $M$ is coprime to $pq$. Hence for every positive integer $k$, we have
\begin{align*}
&\pi(z^{\frac{k}{p^iq^j}})y=\pi(z^{\frac{k(lp^iq^j-rM)}{p^iq^j}})y=\pi(z^{kl})\pi(z^{\frac{-krM}{p^iq^j}})y  \\
\tag{$\pi(z^M)y=y$\, and\, $y\in H_{\mathbb{Z}^2}$}
&=\pi(z^{kl})y.
\end{align*}
This shows that  $\pi(\mathbb{Z}[\frac{1}{pq}])y\subseteq\pi(\mathbb{Z})y$.

Lastly we prove that ${\rm Span}(\pi(\mathbb{Z})y)$ is finite dimensional.

Every $k\in\mathbb{Z}$ can be written as $k=lN+r$ for some $l\in\mathbb{Z}$ and $0\leq r<N$. Hence $\pi(z^k)y=\pi(z^{lN+r})y=\pi(z^r)y$. This implies that
$$\pi(\mathbb{Z})y\subset {\rm Span}\{\pi(z^i)y\}_{i=0}^{N-1}.$$ So $\dim{H}\leq N$. We finish proof of the claim.
\end{proof}

Define a state $\psi$ on $C(S_{pq})\rtimes \mathbb{Z}^2$ by $\psi(b)=\langle \pi(b)y,y\rangle$ for every $b\in C(S_{pq})\rtimes \mathbb{Z}^2$. We have $\psi\in P^1(C(S_{pq})\rtimes \mathbb{Z}^2)$ since $\pi$ is irreducible and $y\in H_{\mathbb{Z}^2}$. By Theorem~\ref{homeomorphism between states}, $\nu=R(\psi)=\psi|_{C(S_{pq})}$ is an ergodic $\times p,\times q$-invariant measure on $S_{pq}$.

By Theorem~\ref{ErgodicIrr} and Theorem~\ref{dim}, we have $\pi\sim\rho_\nu$. Of course $H\cong L^2(S_{pq},\nu)$. From the claim,  $L^2(S_{pq},\nu)$ is finite dimensional.

Hence $\nu$ is finitely supported in $S_{pq}$. Let  $\mu=R(\nu)=\nu|_{C(S_{pq})}$. As before, $\rho_\nu$ is unitarily equivalent to $\pi_\mu$. Hence $L^2(\mathbb{T},\mu)\sim L^2(S_{pq},\nu)$ is finite dimensional. Hence $\mu$ is a finitely supported ergodic $\times p,\times q$-invariant measure on $\mathbb{T}$ and $\pi\sim \pi_\mu$.
\end{proof}

Consequently we have the following.

\begin{corollary}~\label{FviaR}
Furstenberg's conjecture is true iff there is a unique irreducible unitary representation $U:\mathbb{Z}[\frac{1}{pq}]\rtimes\mathbb{Z}^2\to B(H)$ such that $H_{\mathbb{Z}^2}\neq 0$ and $\pi(z^k)x\neq x$ for every nonzero integer $k$ and nonzero $x\in H_{\mathbb{Z}^2}$.
\end{corollary}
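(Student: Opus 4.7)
The plan is to view Furstenberg's dichotomy as a dichotomy on irreducible unitary representations of the semidirect product group $\mathbb{Z}[\frac{1}{pq}]\rtimes\mathbb{Z}^2$, by concatenating Proposition~\ref{invariant measures on solenoid}, Corollary~\ref{rep}, and Theorem~\ref{finiterepresentation}. First I would apply Corollary~\ref{rep} with discrete abelian group $\mathbb{Z}[\frac{1}{pq}]$ and acting group $\mathbb{Z}^2$: since $S_{pq}$ is by definition the Pontryagin dual of $\mathbb{Z}[\frac{1}{pq}]$, equivalence classes of irreducible unitary representations $\pi$ of $\mathbb{Z}[\frac{1}{pq}]\rtimes\mathbb{Z}^2$ with $H_{\mathbb{Z}^2}\neq 0$ are in bijection with ergodic $\times p,\times q$-invariant regular Borel probability measures on $S_{pq}$, and each such $H_{\mathbb{Z}^2}$ is one-dimensional. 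Composing with Proposition~\ref{invariant measures on solenoid} then yields a bijection with ergodic $\times p,\times q$-invariant measures on $\mathbb{T}$.

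Next I would invoke Theorem~\ref{finiterepresentation}: under the composite bijection just described, a measure on $\mathbb{T}$ is finitely supported iff the corresponding representation satisfies condition~(3), namely that there exists a nonzero $N\in\mathbb{Z}$ with $\pi(z^N)x=x$ for every $x\in H_{\mathbb{Z}^2}$. The negation of this reads: for every nonzero $N\in\mathbb{Z}$ there exists $x\in H_{\mathbb{Z}^2}$ with $\pi(z^N)x\neq x$. Because $\dim H_{\mathbb{Z}^2}\leq 1$ by Corollary~\ref{rep}, any two nonzero vectors in $H_{\mathbb{Z}^2}$ differ by a scalar, so this ``some $x$'' can be upgraded to ``every nonzero $x$'', giving the corollary's formulation verbatim.

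To close the loop I must verify that the Lebesgue measure on $\mathbb{T}$ is itself a non-finitely-supported ergodic $\times p,\times q$-invariant measure whose associated representation satisfies the corollary's condition, as otherwise the ``if'' direction would be vacuous. Pulling back by Proposition~\ref{invariant measures on solenoid}, Lebesgue corresponds to the normalized Haar measure $\nu$ on $S_{pq}$ (both are characterized by $\int z^k=\delta_{k,0}$ on the appropriate lattice); the representation $\rho_\nu$ of $C(S_{pq})\rtimes\mathbb{Z}^2$ has $L^2(S_{pq},\nu)_{\mathbb{Z}^2}=\mathbb{C}\hat 1$ by ergodicity, and for every nonzero $k\in\mathbb{Z}$ the character $z^k\in C(S_{pq})$ is not the constant function $1$, so $\rho_\nu(z^k)\hat 1=\widehat{z^k}\neq\hat 1$. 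Assembling the three steps, Furstenberg's conjecture --- exactly one non-finitely-supported ergodic $\times p,\times q$-invariant measure on $\mathbb{T}$, namely Lebesgue --- becomes precisely the uniqueness of an irreducible unitary representation of $\mathbb{Z}[\frac{1}{pq}]\rtimes\mathbb{Z}^2$ with $H_{\mathbb{Z}^2}\neq 0$ satisfying the stated nontriviality condition. The argument is essentially a bookkeeping synthesis; the one delicate point is the logical manoeuvre in paragraph two from ``some'' to ``every'', which is clinched by the dimension-one conclusion of Corollary~\ref{rep}, together with the explicit check that the Haar measure on $S_{pq}$ falls on the correct side of the dichotomy.
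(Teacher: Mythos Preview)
Your proposal is correct and follows the route the paper intends: the corollary is stated without proof immediately after Theorem~\ref{finiterepresentation} under the rubric ``Consequently we have the following,'' so the paper's own argument is precisely the synthesis of Proposition~\ref{invariant measures on solenoid}, Corollary~\ref{rep}, and Theorem~\ref{finiterepresentation} that you spell out. Your explicit check that the Haar measure on $S_{pq}$ (equivalently Lebesgue on $\mathbb{T}$) satisfies the nontriviality condition, and your observation that $\dim H_{\mathbb{Z}^2}\le 1$ lets one pass from ``some nonzero $x$'' to ``every nonzero $x$,'' are details the paper leaves implicit but which are needed to make the biconditional airtight.
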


\end{document}